%% file: main.tex
\documentclass[11pt]{amsart}

\usepackage{amsmath, amsthm, amssymb}
\usepackage{lmodern}
\usepackage{palatino}                       
\usepackage[bigdelims,vvarbb]{newpxmath}    
\usepackage[scaled=0.95]{inconsolata}       
\usepackage[T1]{fontenc}                    

\usepackage{comment}

\usepackage[abs]{overpic}		  
\usepackage{import}
\usepackage{transparent}

\usepackage{xcolor}
\definecolor{lred}{RGB}{226, 106, 106}
\definecolor{nred}{RGB}{237, 28, 36}
\definecolor{lblue}{RGB}{52, 152, 219}
\definecolor{nblue}{RGB}{0, 174, 239}
\definecolor{lyellow}{RGB}{232, 197, 91}
\definecolor{dgreen}{RGB}{0, 148, 68}
\definecolor{l1yellow}{RGB}{217, 224, 33}
\definecolor{lgrey}{RGB}{179, 179, 179}
\definecolor{indigo}{rgb}{0.29, 0.0, 0.51}  
\usepackage[colorlinks, urlcolor=indigo, linkcolor=indigo, citecolor=indigo]{hyperref}

\usepackage[hcentering, vcentering, total={5.9in, 8.2in}]{geometry}  

\usepackage{tikz-cd}
\usetikzlibrary{cd}
\tikzset{
  symbol/.style={
    draw=none,
    every to/.append style={
      edge node={node [sloped, allow upside down, auto=false]{$#1$}}
    },
  },
}
\usetikzlibrary{trees}

\theoremstyle{plain}

\newtheorem{theorem}{Theorem}
\newtheorem{corollary}[theorem]{Corollary}

\newtheorem{lemma}[theorem]{Lemma}

\newtheorem{ques}[theorem]{Question}

\theoremstyle{definition}
\newtheorem{definition}[theorem]{Definition}

\theoremstyle{remark}
\newtheorem{remark}[theorem]{Remark}

\numberwithin{theorem}{section}

\newcommand{\Q}{\mathbb{Q}}           
\newcommand{\Z}{\mathbb{Z}}           
\newcommand{\PD}{\mathrm{PD}\,} 

\makeatletter
\newcommand*\bigcdot{\mathpalette\bigcdot@{0.6}}
\newcommand*\bigcdot@[2]{\mathbin{\vcenter{\hbox{\scalebox{#2}{$\m@th#1\bullet$}}}}}
\makeatother

\newcommand{\Pushoff}{{\def\svgwidth{1,6ex}\,\,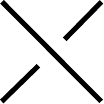\,\,}} 

\DeclareMathOperator\tb{tb}                               

\DeclareFontFamily{U} {cmr}{}
\DeclareFontShape{U}{cmr}{m}{n}{
  <-6> cmr5
  <6-7> cmr6
  <7-8> cmr7
  <8-9> cmr8
  <9-10> cmr9
  <10-12> cmr8
  <12-> cmr9}{}
\DeclareSymbolFont{Xcmr} {U} {cmr}{m}{n}

\title{Contact Surgery Numbers of the $3$-Torus}

\author{Prerak Deep}
\address{Indian Statistical Institute, Kolkata, India}
\email{prerakdg@gmail.com}

\author{Monika Yadav}
\address{Institute of Mathematics, University of Warsaw, Poland}
\email{monika.yadav9413@gmail.com}

\date{\today}

\begin{document}

\begin{abstract}
    We study contact surgery numbers for contact structures on the $3$-torus. We show that all contact structures obtained via contact surgery along a Legendrian Borromean ring are overtwisted, and that this construction yields infinitely many pairwise non-contactomorphic contact structures on $\mathbb{T}^3$. We prove an obstruction on the maximal Thurston–Bennequin invariant of $3$-component links to produce $\mathbb{T}^3$ by Dehn surgery. On a side note, using Legendrian surgery and the ruling invariant, we show that the total symplectic homology of Stein fillings of $(\mathbb{T}^3,\xi_1)$ is non-zero.
\end{abstract}

\maketitle

\section{Introduction}
Contact surgeries along Legendrian links are a fundamental method to construct contact $3$-manifolds. One of the celebrated theorems is by Ding and Geiges that states that every contact $3$-manifold with a coorientable contact structure admits a contact surgery diagram. However, little is understood about how complicated the contact surgery diagrams are for a given contact manifold. To get a better understanding, the study of contact surgery numbers was initiated in \cite{KegelEtnyreOnaran}. The \emph{contact surgery number} $cs(M,\xi)$ of a contact $3$-manifold $(M,\xi)$ is defined as the minimal number of components in a Legendrian link in $(\mathbb{S}^3,\xi_{std})$ such that rational contact surgery (with non-vanishing contact surgery coefficients) along it yields $(M,\xi)$. Analogously, one defines $cs_{\mathbb{Z}}(M,\xi)$ and $cs_{\pm1}(M,\xi)$ by restricting the conact surgery coefficient to be integers or $\pm1$, respectively.

Later, the contact surgery numbers were studied for some families of lens spaces $l(4m+3,4)$, the Brieskorn sphere $\Sigma(2,3,11)$ in \cite{ChatterjeeKegel}, and for Projective spaces in \cite{KegelYadav}. 

The purpose of this paper is to investigate contact surgery numbers for the $3$-torus $\mathbb{T}^3$. The manifold $\mathbb{T}^3$ is particularly interesting from the perspective of contact topology. It admits infinitely many pairwise non-contactomorhic ( in fact non-isotopic) tight contact structures as well as infinitely many overtwisted contact structures. 
 In \cite{KegelEtnyreOnaran}, it was proved that $cs_{\pm 1}(\mathbb{T}^3,\xi)\leq 4$ for any tight contact structure $\xi$ on $\mathbb{T}^3$. Beyond this upper bound, however, little is known about minimal surgery presentations or about the contact structures obtained from Legendrian realizations of the Borromean rings. In this article, we study the contact structures which arise as contact surgeries on a Legendrian Borromean ring; in particular, we show that they are all overtwisted.
 
 Besides determining the minimal number of contact surgery components, it is natural to ask to what extent contact surgery diagrams are unique. For the unique Stein-fillable contact structure $(\mathbb{T}^3,\xi_1)$, a family of Legendrian surgery diagrams was constructed in \cite{Ding_Geiges_Diffeotopy} and \cite{KegelEtnyreOnaran}. We construct another contact surgery diagram and show that the corresponding Legendrian knot is smoothly isotopic, but not Legendrian isotopic, to one of the previously known examples. This establishes the non-uniqueness of Legendrian surgery diagrams for the tight contact structure $(\mathbb{T}^3,\xi_1)$. Moreover, using only the ruling invariants of these Legendrian knots defined in \cite{Leverson}, we show that the associated Stein filling has non-zero total symplectic homology. Our first main result is the following.

\begin{theorem}\label{stein}
    Let $\title{L}$ be the Legendrian knot in $(\#^2\mathbb{S}^1\times\mathbb{S}^2,\xi_{std})$ as shown in Figure \ref{fig:kirby}. The total symplectic homology of the Stein manifold obtained by $(-1)$-surgery along $\widetilde{L}$ is non-zero. Moreover, the total symplectic homology of any Stein filling of $(\mathbb{T}^3,\xi_1)$ is non-zero.
\end{theorem}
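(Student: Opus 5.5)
The plan is to use the surgery exact triangle and the Legendrian contact homology description of symplectic homology due to Bourgeois--Ekholm--Eliashberg. Let $W$ be the Stein manifold obtained from the subcritical Stein manifold $\natural^2(S^1\times D^3)$, with boundary $(\#^2\mathbb{S}^1\times\mathbb{S}^2,\xi_{std})$, by attaching a single Weinstein $2$-handle along $\widetilde{L}$. By the surgery formula of Bourgeois--Ekholm--Eliashberg, $SH(W)$ is isomorphic to the homology of a complex built from the Chekanov--Eliashberg DGA $\mathcal{A}(\widetilde L)$ (equivalently, the wrapped Floer cohomology of the cocore disk is $H_*(\mathcal{A}(\widetilde L))$). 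Moreover, $SH(W)=0$ if and only if the unit vanishes, and by the Viterbo-type transfer map the wrapped Floer cohomology of the cocore is then also zero. So it is enough to show $H_*(\mathcal{A}(\widetilde L))\neq 0$. This in turn follows if $\mathcal{A}(\widetilde L)$ admits an augmentation $\epsilon\colon \mathcal{A}(\widetilde L)\to \Z_2$: such an $\epsilon$ is a unital DGA map to a nonzero ring, so the unit $1$ cannot be a boundary.

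The main input is therefore the existence of an augmentation, and this is where the ruling invariant comes in. Using the Gompf standard form of $\widetilde L$ in $\#^2\mathbb{S}^1\times\mathbb{S}^2$ from Figure~\ref{fig:kirby}, I would exhibit an explicit (graded or ungraded) normal ruling of the front diagram in the sense of Leverson. Concretely, I would pair the strands through the $1$-handles compatibly and choose switches at crossings satisfying the normality conditions, and then compute that the ruling polynomial is nonzero. Leverson's theorem, which generalizes Fuchs and Sabloff to Legendrians in $\#^k\mathbb{S}^1\times\mathbb{S}^2$, says that a front has a normal ruling if and only if its DGA (Ekholm--Ng's version) has an augmentation. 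This gives $\epsilon$, hence $H_*(\mathcal{A}(\widetilde L))\neq 0$ and $SH(W)\neq 0$. I expect the main obstacle to be the careful combinatorics of the ruling at the $1$-handle attaching regions. If the $\Z$-graded version fails because of rotation numbers, I would first try the $\Z_2$-graded (ungraded) ruling, which still suffices to produce a $\Z_2$-augmentation.

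For the second statement, let $X$ be any Stein filling of $(\mathbb{T}^3,\xi_1)$. By Wendl's classification, every minimal strong (in particular Stein) filling of $(\mathbb{T}^3,\xi_1)$ is symplectic deformation equivalent to $T^*T^2$, and Stein fillings are minimal. Since $SH$ is invariant under Liouville/Stein deformation, $SH(X)\cong SH(T^*T^2)\cong H_{*}(\mathcal{L}T^2)\neq 0$ by Viterbo--Abbondandolo--Schwarz--Salamon. Alternatively, and closer to the spirit of the paper, the Stein manifold $W$ built above fills $(\mathbb{T}^3,\xi_1)$ once we check that $(-1)$-surgery on $\widetilde L$ yields $\xi_1$; this follows because the resulting structure is Stein fillable, hence is $\xi_1$ by Eliashberg's uniqueness. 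Wendl's theorem then identifies every Stein filling with $W$ up to deformation, and the first part applies.
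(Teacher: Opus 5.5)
Your argument is correct, and its skeleton is the paper's: a normal ruling of $\widetilde L$ forces $SH\neq 0$, and uniqueness of Stein fillings of $(\mathbb{T}^3,\xi_1)$ transfers this to every filling. The differences lie in what you prove and what you cite.

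The paper checks that $rot(\widetilde L)=0$ and assigns Maslov potentials. It observes that every crossing except $c$ has grading $0$ and exhibits two graded rulings (switches everywhere except at $\{b,c\}$, respectively $\{a,c\}$). It then quotes Leverson's Corollary~1.4, which needs a \emph{graded} ruling.

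You instead unpack that corollary: ruling $\Rightarrow$ augmentation $\Rightarrow$ the unit of $H(\mathcal{A}(\widetilde L))$ is not a boundary $\Rightarrow$ $HW(C,C)\neq 0$ for the cocore $\Rightarrow$ $SH(W)\neq 0$. Since ``$1$ is not a boundary'' ignores gradings, your route needs only \emph{some} augmentation, so an ungraded ruling would already do. As far as this argument goes, that would give $SH\neq 0$ directly from the knot $L$ of Corollary~\ref{cor5}, whose rulings are ungraded.

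For the ``moreover'' clause, your first option (Wendl's uniqueness plus $SH(T^*\mathbb{T}^2)\cong H_*(\mathcal{L}\mathbb{T}^2)\neq 0$) makes part one unnecessary. The paper's route, which is your second option, instead needs part one together with the identification of the surgered manifold as the Stein fillable $(\mathbb{T}^3,\xi_1)$.

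A few corrections:
\begin{itemize}
\item The map giving $SH(W)=0\Rightarrow HW(C,C)=0$ is the closed--open map, a unital ring homomorphism, not a Viterbo transfer.
\item $SH(W)\cong LH^{Ho}(\widetilde L)$ and $HW(C,C)\cong H(\mathcal{A}(\widetilde L))$ are separate results of Bourgeois--Ekholm--Eliashberg, not equivalent formulations. The latter holds with $t$ specialized to $\pm1$, which is harmless over $\mathbb{Z}_2$.
\item Invariance of $SH$ holds under Liouville/Stein deformation, not mere symplectic deformation. You should therefore invoke the Stein-deformation form of the uniqueness of fillings.
\item The one concrete verification in part one, actually producing the ruling, is left undone in your proposal, whereas the paper carries it out.
\end{itemize}
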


\begin{corollary}\label{cor5}
    There exists a non-destabilizable Legendrian knot $L$ in $(\#^2\mathbb{S}^1\times\mathbb{S}^2,\xi_{std})$ such that $L$ admits no graded rulings and the Stein manifold obtained by contact $(-1)$-surgery along it has total symplectic homology non-zero.
\end{corollary}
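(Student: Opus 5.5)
Corollary~\ref{cor5} should follow by combining Theorem~\ref{stein} with a ruling computation for the other Legendrian surgery diagram of $(\mathbb{T}^3,\xi_1)$. Take $L$ to be the Legendrian knot in $(\#^2\mathbb{S}^1\times\mathbb{S}^2,\xi_{std})$ from the previously known diagram of \cite{Ding_Geiges_Diffeotopy, KegelEtnyreOnaran}, that is, the knot smoothly isotopic but not Legendrian isotopic to $\widetilde{L}$. Contact $(-1)$-surgery on $L$ gives $(\mathbb{T}^3,\xi_1)$, and the trace of this surgery is a Stein filling $W_L$ of $(\mathbb{T}^3,\xi_1)$. By the second part of Theorem~\ref{stein}, every Stein filling of $(\mathbb{T}^3,\xi_1)$ has non-zero total symplectic homology, so $SH(W_L)\neq 0$. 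It then remains to check two things: that $L$ admits no graded rulings, and that $L$ is non-destabilizable.

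\emph{No graded rulings.} I would use the front of $L$ in Gompf standard form and Leverson's definition of graded normal rulings for Legendrians in $\#^k\mathbb{S}^1\times\mathbb{S}^2$ \cite{Leverson}. First fix a Maslov potential, which is possible since $\mathrm{rot}$ behaves appropriately. Then enumerate the admissible switch sets at crossings, subject to the 1-handle matching condition: strands entering a 1-handle must be paired compatibly on both sides. The expected obstruction is simple. Either the diagram has a pair of strands through a 1-handle whose Maslov potentials are incompatible with any normality-preserving pairing, or the parity or count of cusps forces an unpaired strand. If this holds, the ruling polynomial vanishes. Presumably the paper's comparison between $L$ and $\widetilde{L}$ rests on exactly this: $\widetilde{L}$ has a graded ruling and $L$ has none, which is also why they are not Legendrian isotopic.

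\emph{Non-destabilizability.} The key point is that $(-1)$-surgery on a stabilized knot gives an overtwisted-free yet Stein filling whose symplectic homology vanishes. Indeed, by Bourgeois–Ekholm–Eliashberg, Legendrian surgery on a stabilization yields a Stein domain with $SH=0$, since the Chekanov–Eliashberg algebra of a stabilized knot is trivial. As $SH(W_L)\neq 0$, $L$ cannot be a stabilization. This argument is also what makes the corollary interesting. Non-vanishing $SH$ is usually detected through augmentations, and for knots in $\mathbb{R}^3$ these correspond to rulings. Here $L$ has no graded rulings, yet its non-destabilizability is still detected through the filling.

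The main obstacle is the ruling computation. The 1-handle conditions make the case analysis delicate, so I would organize it by first determining which strand pairings through the two 1-handles are compatible with the Maslov potential, and only then propagating switches. If a direct obstruction fails, I would fall back on the skein relations for the ruling polynomial, reducing to simpler diagrams whose ruling polynomials are known to vanish.
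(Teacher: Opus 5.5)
Your first step matches the paper: $SH(W_L)\neq 0$ follows from uniqueness of the Stein filling of $(\mathbb{T}^3,\xi_1)$ together with Theorem~\ref{stein}.

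For non-destabilizability you take a different but valid route.
\begin{itemize}
\item The paper argues that $L$ and $\widetilde L$ are smoothly isotopic with $tb=1$, and that this value is maximal, so $L$ cannot be a stabilization.
\item You argue that a stabilized knot has acyclic Ekholm--Ng DGA: the zigzag can be placed away from the $1$-handles, giving a chord with $\partial a=1$. By the Bourgeois--Ekholm--Eliashberg surgery formula, the resulting Weinstein domain would then have $SH=0$, which contradicts $SH(W_L)\neq0$.
\end{itemize}
Your route uses heavier machinery, but it ties non-destabilizability directly to the filling and does not need maximality of $tb=1$. The paper's route is elementary, but it rests on that maximality, which it deduces from the graded ruling of $\widetilde L$.

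The genuine gap is the claim that $L$ admits no graded normal ruling. This is the actual new content of the corollary, and your proposal does not establish it. You list candidate obstructions only conditionally (``If this holds\ldots''), and your fallback is an unspecified skein computation. The mechanisms you guess are also not the relevant one. An unpaired strand forced by cusp count or parity would rule out every normal ruling, graded or not. The obstruction here is specifically to gradedness.

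The paper's argument runs as follows.
\begin{itemize}
\item Fix a Maslov potential on the normal-form front of $L$; it is $\mathbb{Z}$-valued since $\mathrm{rot}(L)=0$.
\item Observe that the first crossing next to the $x=0$ end of one of the $1$-handles (the blue one in the figure) has nonzero grading.
\item Check that the ruling conditions there force every normal ruling of $L$ to be switched at that crossing.
\item A graded ruling may switch only at crossings of grading $0$, so no normal ruling of $L$ is graded.
\end{itemize}
You need to identify such a forced switch at a nonzero-degree crossing, or actually carry out the full enumeration. A useful way to organize it: in a graded ruling, the Maslov potential is constant along each path, so companion paths differ by exactly $1$ everywhere. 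Without this, the ``no graded rulings'' part of the statement remains unproved.
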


The next natural question to ask is which three-component surgery diagrams can produce $\mathbb{T}^3$. Since every such diagram must satisfy some topological constraints, one expects corresponding restrictions on the classical invariants of Legendrian representatives. Our first obstruction is the following. The result is noteworthy as it derives constraints on the maximal Thurston–Bennequin invariant from topological surgery considerations.

\begin{theorem}\label{tb_obstruction}
    Let $L=K^1\sqcup K^2\sqcup K^3$ be a link with $ 3$ components. If surgery along $L$ produces $\mathbb{T}^3$ and $\max{tb(L)}=\sum_{i}\max{tb(K^i)}$, then the maximal $tb(K^i)<1$ for atleast one $i$.
\end{theorem}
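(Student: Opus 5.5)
The plan is to turn a hypothetical counterexample into a Stein filling of $\mathbb{T}^3$ whose topology contradicts Wendl's classification of fillings of $(\mathbb{T}^3,\xi_1)$. The argument has three steps: pin down the surgery framings homologically, realize them by Legendrian surgery, and compare the resulting Stein domain with the known fillings.

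\emph{Step 1: the framings are forced.} Suppose (Dehn) surgery on $L=K^1\sqcup K^2\sqcup K^3\subset \mathbb{S}^3$ with coefficients $p_i/q_i$ yields $\mathbb{T}^3$. Then $H_1$ of the result is presented by the $3\times 3$ matrix with diagonal entries $p_i$ and off-diagonal entries $q_j\,\mathrm{lk}(K^i,K^j)$ (after the usual rescaling). Since $H_1(\mathbb{T}^3)=\Z^3$ has rank $3$, this matrix must vanish identically. Hence every coefficient is $0$ (each $p_i=0$) and all pairwise linking numbers are $0$. So the surgery is integral, with all framings $0$ and linking matrix the zero matrix.

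\emph{Step 2: realize the framings by Legendrian surgery.} Assume for contradiction that $\max tb(K^i)\ge 1$ for all $i$. By the hypothesis $\max tb(L)=\sum_i \max tb(K^i)$, there is a Legendrian representative of the whole link $L$ in which every component simultaneously attains its maximal Thurston--Bennequin number. Stabilizing each component separately (stabilizations are local, so the link type is preserved), we obtain a Legendrian realization with $tb(K^i)=1$ for every $i$. Contact $(-1)$-surgery on this link has smooth framing $tb-1=0$ on each component. By Step 1 it therefore produces $\mathbb{T}^3$, now equipped with a Stein fillable contact structure $\xi$.

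\emph{Step 3: contradict the classification of fillings.} The filling is the Stein domain $W$ obtained from $B^4$ by attaching three Weinstein $2$-handles along the link. Thus $b_1(W)=0$ and $b_2(W)=3$, and the intersection form is the zero linking matrix. On the other hand, the only Stein (indeed weakly) fillable contact structure on $\mathbb{T}^3$ is $\xi_1$ (Eliashberg, Giroux). By Wendl, every minimal strong (in particular, Stein) filling of $(\mathbb{T}^3,\xi_1)$ is symplectic deformation equivalent to $T^2\times D^2$, which has $b_2=1$ and $b_1=2$. This contradicts $b_2(W)=3$, $b_1(W)=0$, so some $K^i$ must have $\max tb(K^i)<1$.

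The main obstacle is Step 2. There I must make sure the hypothesis $\max tb(L)=\sum\max tb(K^i)$ is read as giving a simultaneous Legendrian realization of the link with maximal $tb$ on every component. Granting that, the remaining input is just Wendl's theorem, which I would cite as is.
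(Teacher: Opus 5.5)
Your argument is correct, and Step~3 reaches the contradiction by a different route from the paper's.

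\textbf{Steps 1 and 2.} These match the paper: Lemma~\ref{lem1}, then a simultaneous maximal-$tb$ realization stabilized down to $tb=1$, so that contact $(-1)$-surgery has smooth framing $0$. The worry you flag in Step~2 is settled by Step~1 itself. Since the linking numbers vanish, $tb(L)=\sum_i tb(L_i)$ for every Legendrian realization. So $\overline{tb}(L)=\sum_i\overline{tb}(K^i)$ forces a single realization in which every component attains its maximum.

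\textbf{Step 3, compared with the paper.} The paper never examines the topology of the filling. It uses that all tight structures on $\mathbb{T}^3$ are homotopic (Kanda), with $PD(e)=0$ and $d_3=\tfrac12$ in its normalization (Lemma~\ref{lem2}). With a zero linking matrix, $PD(e)=0$ forces every rotation number to vanish. The $d_3$ formula for three Legendrian surgeries then gives $-\tfrac32\neq\tfrac12$. You instead observe that the result is Stein fillable, hence contactomorphic to $\xi_1$ by Eliashberg. Your $W$ is simply connected with $b_2=3$, so it cannot be deformation equivalent to $T^2\times D^2$, contradicting Wendl.

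\textbf{What each route buys.} Yours is cleaner and avoids rotation-number bookkeeping and $d_3$ normalization conventions, but it rests on Wendl's $J$-holomorphic foliation theorem. The paper's uses only homotopy invariants read off the diagram. That same computation is what it then extends to diagrams containing contact $(+1)$-surgeries (Theorem~\ref{thm_cs3}), where no filling is available.

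\textbf{Two small corrections.}
\begin{itemize}
\item The parenthetical ``indeed weakly'' is false. Giroux showed every $\xi_n$ is weakly fillable; Eliashberg showed only $\xi_1$ is strongly fillable. This is harmless here because $W$ is Stein.
\item You do not need minimality. Wendl's theorem covers all strong fillings as blow-ups of $T^2\times D^2$, and any such blow-up still has $b_1=2\neq 0=b_1(W)$.
\end{itemize}
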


\begin{remark}
Note that, in general, for a Legendrian link
$L=\bigsqcup_{i=1}^n L_i,$
the Thurston--Bennequin invariant satisfies
\[
tb(L)=\sum_{i=1}^n tb(L_i)+2\sum_{1\leq i<j\leq n} lk(L_i,L_j).
\]
However, throughout this paper we mostly consider Legendrian links with three components whose pairwise linking numbers are all zero. Therefore, in these cases,
\[
tb(L)=\sum_{i=1}^n tb(L_i).
\]
\end{remark}

Combining surgery calculus with the computation of the Euler class and the $d_3$-invariant, we obtain restrictions on contact structures on $\mathbb{T}^3$ that admit surgery diagrams with only three components.

\begin{theorem}\label{thm_cs3}
    
    Let $\xi$ be a contact structure on $\mathbb{T}^3$ with integer contact surgery number $3$. If $PD(e(\xi))=0$ then $d_3(\xi)\in \{\pm \frac{1}{2},\frac{3}{2}\}$.
\end{theorem}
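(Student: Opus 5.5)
Since $cs_{\mathbb{Z}}(\mathbb{T}^3,\xi)=3$, there is a three-component Legendrian link $L=L_1\sqcup L_2\sqcup L_3\subset(\mathbb{S}^3,\xi_{std})$ and nonzero integers $n_i$ such that contact $(n_i)$-surgery along $L$ gives $(\mathbb{T}^3,\xi)$. The smooth surgery coefficients are then $p_i=tb(L_i)+n_i$. The plan has two steps: first use the topology to pin down the linking matrix, then substitute into the standard formulas for $e(\xi)$ and $d_3(\xi)$.

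\emph{Topological step.} Because $H_1(\mathbb{T}^3)=\Z^3$ and the surgered manifold has $H_1\cong\operatorname{coker}Q$, where $Q$ is the $3\times3$ linking matrix with diagonal $p_i$ and off-diagonal entries $lk(L_i,L_j)$, we need $\operatorname{coker}Q\cong\Z^3$. This forces $Q=0$, so all $p_i=0$ and all pairwise linking numbers vanish (as in the Remark). The signature of the four-manifold $X$ given by the topological surgery is then $\sigma(X)=0$, and $\chi(X)=1+3=4$.

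\emph{Contact step.} I would transform each integer contact surgery with $n_i\neq\pm1$ into a sequence of $\pm1$ surgeries, following the Ding--Geiges--Stipsicz algorithm. For $n_i>0$ this means $(+1)$-surgery on $L_i$ followed by $(-1)$-surgeries on $n_i-1$ pushoffs, each stabilized once; for $n_i<0$ it means stabilizing $|n_i|-1$ times and doing a single Legendrian surgery. At each step the choice of stabilization signs (the rotation numbers) is a free parameter. I would then apply the formulas of Ding--Geiges--Stipsicz:
\[
PD(e(\xi))=\sum_i \mathrm{rot}(L_i)\,\mu_i,\qquad d_3(\xi)=\tfrac14\bigl(c^2-3\sigma(X)-2\chi(X)\bigr)+q,
\]
where $q$ is the number of $(+1)$-surgeries in the expanded diagram. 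Here $c^2$ is computed by solving $Qx=\mathrm{rot}$ on the expanded linking matrix. In the unexpanded picture $Q=0$, so the Euler class condition $PD(e(\xi))=0$ should force $\mathrm{rot}(L_i)=0$, or at least force the appropriate combinations in the expanded picture to vanish.

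\emph{Case analysis.} With $\mathrm{rot}=0$, I would compute $c^2$, $\sigma$, $\chi$ and $q$ explicitly as functions of $n_i$ and $tb(L_i)=-n_i$. The contributions of a single integer surgery of coefficient $n$ on a knot that is nullhomologous in the complement can be computed separately, since the other components decouple homologically because the linking numbers vanish. I then expect each component to contribute a fixed amount depending only on the sign of $n_i$. For instance, a $(+1)$ component contributes $+1/2$ after the $-\chi/2$ correction, and $-1$ surgeries contribute $-1/2$, with general $n_i$ reducing to these. Summing over the sign patterns $(+++),(++-),(+--),(---)$, with the base shift from $\chi=1+\cdots$, should give the values $\{3/2,1/2,-1/2,\ldots\}$.

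The main obstacle is arguing which sign patterns can actually occur. The paper already shows (Theorem~\ref{tb_obstruction}) that some component has $\max tb<1$, and $tb(L_i)=-n_i$. If all $n_i<0$, the result would be Legendrian surgery on a link with $tb_i=-n_i>0$, and that is presumably what the obstruction rules out, eliminating the value $-3/2$. The delicate part is checking that the general-$n_i$ contributions really collapse to a value depending only on the sign of $n_i$; this is where I would spend the most care, verifying it with the rational-surgery $d_3$ formula of Durst--Kegel.
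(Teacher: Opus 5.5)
Your skeleton matches the paper's. Lemma~\ref{lem1} kills the topological linking matrix, so $tb(L_i)=-n_i$. The translation and replacement lemmas expand a positive coefficient $n_i$ into $(+1)$-surgery on $L_i$ plus $(-1)$-surgeries on $n_i-1$ copies of a once-stabilized pushoff $L_i'$. The all-negative pattern is excluded via Remark~\ref{rmk1}. Your block-diagonal decoupling is correct, and it packages the paper's nine cases more cleanly.

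\textbf{The key step rests on a false premise.} For $n_i\ge 2$, $PD(e(\xi))=0$ does \emph{not} force $\mathrm{rot}(L_i)=0$. Put $t=tb(L_i)=-n_i$ and $\mathrm{rot}(L_i')=\mathrm{rot}(L_i)+\epsilon_i$ with $\epsilon_i=\pm1$. The block of $L_i$ and its copies has diagonal entries $t+1,t-2,\dots,t-2$, off-diagonal entries $t$ (copies to $L_i$) and $t-1$ (between copies). Its kernel is spanned by $(t,1,\dots,1)$. So $e=0$ iff $t\,\mathrm{rot}(L_i)+(n_i-1)(\mathrm{rot}(L_i)+\epsilon_i)=0$, i.e.\ $\mathrm{rot}(L_i)=\epsilon_i(n_i-1)$. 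This is the paper's condition $r_i=\pm(-t_i-1)$. With $\mathrm{rot}(L_i)=0$ the rotation vector is not even in the image of $Q$, so $c^2$ is undefined.

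With the correct rotation, $b=(-\epsilon_i,0,\dots,0)$ solves $Qb=\mathrm{rot}$. The block then contributes $c^2=1-n_i$, $\sigma=-(n_i-1)$, $n_i$ handles and $q=1$, hence $\tfrac14\bigl((1-n_i)+3(n_i-1)-2n_i\bigr)+1=\tfrac12$. The nonzero $c^2$ is exactly what cancels the $n_i$-dependence; setting it to $0$ would give $\tfrac{n_i+1}{4}$. So the ``delicate part'' you deferred is the missing idea, not a routine check.

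\textbf{Your stated values do not follow from your formula.} Since $\chi(X)=1+N$, with $N$ the number of handles, $\tfrac14(c^2-3\sigma-2\chi)+q$ equals $-\tfrac12$ plus per-component contributions of $\pm\tfrac12$. The sign patterns $(+++),(++-),(+--),(---)$ therefore give $1,0,-1,-2$, not $\tfrac32,\tfrac12,-\tfrac12$. The paper's values come from the second formula in Lemma~\ref{lem:d3}, which has no constant term. That same convention gives $d_3=\tfrac12$ for tight structures in Lemma~\ref{lem2}, while your formula gives $0$ on that diagram. Fix one normalization and use it throughout, including when you exclude $(---)$.

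That exclusion needs no ``presumably''. Legendrian surgery on $(\mathbb{S}^3,\xi_{std})$ is Stein fillable, hence tight. So $e=0$ and $d_3$ must equal the tight value, whereas your count gives a value exactly $2$ smaller.
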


As consequences of the above theorem, we obtain an obstruction on the classical invariants of Legendrian surgery links producing tight contact structures on $\mathbb{T}^3$, together with finiteness results for overtwisted structures having contact surgery number $3$. We will skip the proof of Corollary \ref{cor4}, as it directly follows from the proof of Corollary \ref{cor1} specified to contact $(\pm1)$-surgeries.

\begin{corollary}\label{cor1}
   
    Let $K^1\sqcup K^2\sqcup K^3$ be a Legendrian link such that contact surgery along it produces $(\mathbb{T}^3,\xi)$. If $\xi$ is tight, then (possibly after re-enumeration), $tb(K^1)\geq 1,tb(K^i)\leq - 1, rot(K^i)=\pm (tb(K^i)-1),\: i=2,3$.
\end{corollary}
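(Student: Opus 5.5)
The plan is to combine the topological input from Theorem \ref{thm_cs3} with the standard formulas for the Euler class and the $d_3$-invariant of a contact $(\pm1)$-surgery, together with known facts about tight structures on $\mathbb{T}^3$.

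\emph{Step 1: reduce to $(\pm1)$-surgeries.} By the transformation lemma of Ding--Geiges, a contact $r$-surgery along a Legendrian knot with $r\neq 0$ can be replaced by a sequence of contact $(\pm1)$-surgeries on stabilized push-offs. A three-component diagram for $\mathbb{T}^3$ needs all three components to carry the surgery and, by the remark in the introduction, can be assumed to have pairwise linking numbers zero. I would argue, as in the proof of Theorem \ref{thm_cs3}, that this forces integral smooth coefficients $0$ on each component of a Borromean-type link. The contact coefficient on $K^i$ is then $-tb(K^i)$, and it is a $(\pm1)$-surgery exactly when $tb(K^i)=\mp1$. In general I would keep the integer contact coefficients $r_i$ with $tb(K^i)+r_i=0$.

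\emph{Step 2: sign constraints from tightness.} Every tight contact structure on $\mathbb{T}^3$ has $e(\xi)=0$; they are the $\xi_n$, all with trivial Euler class. So $PD(e(\xi))=0$, and Theorem \ref{thm_cs3} gives $d_3(\xi)\in\{\pm\frac12,\frac32\}$. Tight structures on $\mathbb{T}^3$ are homotopic as plane fields to the Stein fillable $\xi_1$, which has $d_3=\frac12$ (I'd verify this by computing from the known diagram). The $d_3$ formula is
\[
d_3=\tfrac14\left(c^2-3\sigma(X)-2\chi(X)\right)+q,
\]
where $q$ counts the $(+1)$-surgeries. With $c=0$ forced by $e=0$ and by the linking matrix being zero, this reduces to a count of positive and negative surgeries. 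For contact surgery coefficient $-tb$ on $K^i$, the surgery is positive exactly when $tb(K^i)>0$. The sign distribution giving $d_3=\frac12$ should then be exactly one component with positive contact coefficient. Renumbering so that this component is $K^1$ gives $tb(K^1)\geq1$ and $tb(K^2),tb(K^3)\leq -1$; the value $tb=0$ is excluded because the contact coefficient is nonzero.

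\emph{Step 3: rotation numbers.} For $i=2,3$ the surgery is negative with coefficient $-tb(K^i)\geq1$. Replacing it by Legendrian surgeries on stabilized copies involves choices of stabilization signs. The Euler class $c$ evaluated on the generators is $rot$ of the stabilized knot, and $e(\xi)=0$ forces each of these to vanish. Since $tb-1$ stabilizations are made, this happens for every choice only if $rot(K^i)=\pm(tb(K^i)-1)$, so that all stabilizations are forced into one direction. Otherwise two choices would give different Euler classes, or, via the Lisca--Matić/Plamenevskaya argument, non-isotopic structures with differing $c$; this would contradict $e(\xi)=0$ or tightness. I would make this precise by computing $c$ for the resulting $(\pm1)$-diagram, where the $\mathbb{Z}$-homology from the zero linking matrix makes $\mathrm{rot}$ contribute directly.

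The main obstacle is Step 3. The rotation condition really comes from the requirement that the resulting structure have vanishing Euler class for the canonical choice of stabilizations, and pinning down the exact form $\pm(tb-1)$ requires careful bookkeeping in the Euler class formula after the Ding--Geiges replacement. Step 2's $d_3$ arithmetic should be routine once the contribution $\frac14(-3\sigma-2\chi)$ is computed for three components with zero linking matrix.
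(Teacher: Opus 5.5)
Your skeleton matches the paper's. Tightness gives $PD(e(\xi))=0$ and $d_3(\xi)=\frac12$ (Lemma \ref{lem2}). The $d_3$-value fixes the sign pattern of the $tb(K^i)$, and $e(\xi)=0$ fixes the rotation numbers. The paper reads all of this off Cases 1--9 in the proof of Theorem \ref{thm_cs3}. The details you supply for Steps 2 and 3, however, do not work.

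\textbf{Step 2.} The signs are inverted.
\begin{itemize}
\item The contact coefficient $-tb(K^i)$ is positive exactly when $tb(K^i)<0$, not when $tb(K^i)>0$.
\item Tightness requires exactly \emph{two} positive surgeries, not one. For three $(\pm1)$-surgeries with zero linking matrix and $e=0$, your formula gives $d_3=q-2$; in the paper's normalization it is $q-\frac32$. Either way, $q=2$.
\end{itemize}
The two errors cancel, so the conclusion $tb(K^1)\geq1$, $tb(K^2),tb(K^3)\leq-1$ is right, but not by your argument.

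Also, ``reduces to a count'' is only immediate when every $|tb(K^i)|=1$. If $tb(K^i)\leq-2$, the replacement $K^i(+1)\sqcup K^i_1\bigl(\frac{-1}{-t_i-1}\bigr)$ does three things:
\begin{itemize}
\item it produces a nonzero linking matrix;
\item it lowers $\sigma(Q)$ by one;
\item it contributes $n_ib_ir_i=t_i+1\neq0$ to $\langle r,b\rangle$.
\end{itemize}
Note also that $e=0$ on $\mathbb{T}^3$ does not force $c=0$ on the $4$-manifold. That these terms cancel is precisely the case computation in Theorem \ref{thm_cs3}, which you must invoke rather than bypass.

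\textbf{Step 3.} This step fails. For $i=2,3$ the surgery is positive, and the replacement above puts a \emph{single} stabilization (of either sign) on a push-off. The $tb-1$ stabilizations belong to $K^1$, which becomes $K^1_{t_1-1}(-1)$.

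The ``for every choice'' reasoning is also invalid. $\xi$ comes from one particular choice of stabilization. Other choices give other contact structures, which need not be tight, so nothing is contradicted.

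What actually happens is a linear condition for the given choice:
\begin{itemize}
\item For $t_i\leq-2$, using the relation $\mu_i=t_i\mu_i'$, the $\mu_i'$-coefficient of $PD(e(\xi))$ is $r_it_i-(r_i\pm1)(t_i+1)=-r_i\mp(t_i+1)$.
\item For $t_i=-1$, the contribution is $r_i\mu_i$ with $\mu_i$ a free generator, forcing $r_i=0$.
\end{itemize}
So $e(\xi)=0$ forces $rot(K^i)=\mp(tb(K^i)+1)$, i.e.\ $|rot(K^i)|=-tb(K^i)-1$. This is what the paper's computation actually gives (it writes $r_i=\pm(-t_i-1)$).

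The $\pm(tb(K^i)-1)$ in the statement cannot be right. The paper's own tight diagram for $\xi_1$ in Figure \ref{fig:kirby} has $K^2,K^3$ with $tb=-1$ and $rot=0\neq\pm2$. No bookkeeping will produce the formula you were aiming for. The provable statement is $rot(K^i)=\pm(tb(K^i)+1)$ for $i=2,3$.
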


\begin{corollary}\label{cor4}
   
   Let $L=K^1\sqcup K^2\sqcup K^3$ be a Legendrian link such that contact $(\pm 1)$-surgery along it produces $(\mathbb{T}^3,\xi)$. If $\xi$ is tight, then (possibly after re-enumeration), $tb(K^1)=1,tb(K^2),tb(K^3)= -1,\: rot(K^i)=0 \: \forall i $. Moreover, $L$ cannot be obtained from another Legendrian link by stabilizing the $K^2$ and $K^3$-components.

\end{corollary}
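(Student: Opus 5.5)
The plan is to specialize the argument for Corollary~\ref{cor1} to the case where every contact surgery coefficient is $\pm1$. The topological framings are then $tb(K^i)\pm1$, and the contact invariants are given by the standard formulas. For a contact $(\pm1)$-surgery diagram on a link $L$ with linking matrix $Q$ and rotation vector $\mathbf{r}$:
\[
PD(e(\xi))=\sum_i rot(K^i)\,\mu_i ,
\]
\[
d_3(\xi)=\tfrac14\bigl(c^2-3\sigma(Q)-2\cdot 3\bigr)+q ,
\]
where $c^2=\mathbf{x}^TQ\mathbf{x}$ for a solution $Q\mathbf{x}=\mathbf{r}$, and $q$ is the number of $(+1)$-surgeries.

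\textbf{Step 1 (topology).} Since $H_1(\mathbb{T}^3)=\Z^3$ and the diagram has three components, the linking matrix must vanish, $Q=0$. Hence all pairwise linking numbers are $0$ and every framing is $0$. Each framing equals $tb(K^i)\pm1$, so $tb(K^i)=\mp1$ according to the sign of the surgery. By the obstruction of Theorem~\ref{tb_obstruction}, applied as in the proof of Corollary~\ref{cor1}, some component must carry a positive framing coefficient relative to $tb$. Here that means exactly that $tb(K^1)=1$ with a contact $(-1)$-surgery on $K^1$, while $tb(K^2)=tb(K^3)=-1$ with $(+1)$-surgeries. This matches the $tb$ bounds of Corollary~\ref{cor1}: $tb(K^1)\ge 1$ and $tb(K^i)\le -1$.

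\textbf{Step 2 (rotation numbers).} Since $Q=0$, the Euler class is $\sum_i rot(K^i)\mu_i$, with the $\mu_i$ a basis of $H_1$. Tight contact structures on $\mathbb{T}^3$ have vanishing Euler class, so $rot(K^i)=0$ for all $i$. This is consistent with Corollary~\ref{cor1}, which gives $rot(K^i)=\pm(tb(K^i)-1)=\pm2$ for $i=2,3$. That apparent conflict is the point I would resolve by checking the conventions in the proof of Corollary~\ref{cor1}, where the rotation constraint came from $d_3$ rather than from $e$. With $rot=0$, we get $c^2=0$, $\sigma=0$ and $q=2$, so $d_3=-\tfrac32+2=\tfrac12$. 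This value should agree with the $d_3$ of the tight structures, which is the check the $d_3$ computation in Theorem~\ref{thm_cs3} is designed to make. If the conventions instead force $rot(K^i)=\pm2$, I would redo Step 2 with the paper's $d_3$ normalization, but the parity and $e=0$ argument already gives $rot(K^i)=0$.

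\textbf{Step 3 (non-destabilizability).} Suppose $K^2$ (say) were a stabilization $S_\pm(K')$. Then $tb(K')=0$ and $rot(K')=\mp1$, and contact $(+1)$-surgery on $S_\pm(K')$ is contact $(+\tfrac12)$-surgery on $K'$ after a known rational-surgery rewriting, or equivalently contains an overtwisted disk. The standard fact is that $(+1)$-surgery on a stabilized knot yields an overtwisted manifold, because the stabilization disk together with the surgery gives an overtwisted disk. Alternatively, destabilizing changes $rot$ to $\pm1\neq0$, which contradicts Step 2 after re-running Steps 1–2 on the rewritten diagram.

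I expect the main obstacle to be the rigorous justification of Step 3 via the overtwisted-disk argument, together with the convention mismatch in Step 2.
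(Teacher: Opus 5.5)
Your Step~1 has a genuine gap: the configuration $tb(K^1)=1$, $tb(K^2)=tb(K^3)=-1$ does not follow from Theorem~\ref{tb_obstruction}. In the $(\pm1)$-setting, that theorem (in its Legendrian form, Remark~\ref{rmk1}) only rules out the case where all three components carry contact $(-1)$-surgeries. The statement ``some component has a $(+1)$-surgery'' still leaves open two other configurations: three $(+1)$-surgeries on $tb=-1$ knots, or one $(+1)$-surgery and two $(-1)$-surgeries. What excludes these is the $d_3$-invariant. You have all the ingredients but use them backwards. Once $Q=0$ and $rot(K^i)=0$ (your Step~2, which must therefore come first), your formula gives $d_3(\xi)=q-\tfrac32$, i.e.\ the values $\tfrac32,\tfrac12,-\tfrac12,-\tfrac32$ for $q=3,2,1,0$. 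Tightness forces $d_3(\xi)=\tfrac12$ by Lemma~\ref{lem2}, hence $q=2$. This also disposes of $q=0$, so Theorem~\ref{tb_obstruction} is not needed here at all. You instead assume $q=2$ and then verify $d_3=\tfrac12$, which says nothing about the other values of $q$. This $d_3$ comparison is exactly what Corollary~\ref{cor1} packages (Cases 4, 7, 9 of Theorem~\ref{thm_cs3}). The paper's proof simply quotes Corollary~\ref{cor1} for $tb(K^1)\geq 1$, $tb(K^2),tb(K^3)\leq -1$ and combines this with $tb(K^i)=\mp1$.

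The remaining issues are side remarks that should be cut rather than gaps.

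Step~2 is correct as it stands: $Q=0$ makes the $\mu_i$ a basis of $H_1$, so $e=0$ gives $rot(K^i)=0$. There is no real conflict with Corollary~\ref{cor1}. The relation actually derived in the proof of Theorem~\ref{thm_cs3} is $r_i=\pm(-t_i-1)$, which vanishes at $t_i=-1$ (Case~7 gives $r_2=r_3=0$ directly). That relation comes from $e=0$, not from $d_3$, so the hedging about conventions is unnecessary.

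In Step~3, your main argument is precisely the paper's: contact $(+1)$-surgery on a stabilized component yields an overtwisted structure (Proposition~3.2 of Lisca--Stipsicz). This is a citable fact, not the obstacle you anticipate. Both of your auxiliary claims, however, are false.
\begin{itemize}
\item Contact $(+1)$-surgery on $S_\pm(K')$ has topological framing $tb(K')-1+1=tb(K')$. Relative to $K'$ it is therefore the inadmissible contact $0$-surgery, not contact $(+\tfrac12)$-surgery.
\item The rotation-number alternative fails. Having $rot(K^i)=0$ is compatible with $K^i=S_\pm(K')$ and $rot(K')=\mp1$. Moreover, $K'$ has $tb=0$, so it cannot be a component of a contact $(\pm1)$-diagram of $\mathbb{T}^3$: its framing would be $\pm1\neq 0$, contradicting Lemma~\ref{lem1}. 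So Steps~1--2 cannot be re-run on it.
\end{itemize}
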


\begin{corollary}\label{cor2}
There are atmost $3$ overtiwsted contact structures on $\mathbb{T}^3$ with $cs_{\pm 1}(\mathbb{T}^3,\xi)=3$ and $PD(e(\xi))=0$.
\end{corollary}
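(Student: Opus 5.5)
The plan is to combine Theorem~\ref{thm_cs3} with the classification of overtwisted contact structures by homotopy classes of plane fields. By Eliashberg's classification, an overtwisted contact structure on $\mathbb{T}^3$ is determined up to isotopy by the homotopy class of its underlying oriented plane field. On $\mathbb{T}^3$ this class is determined by the spin$^c$ structure, equivalently by $e(\xi)$ since $H_1(\mathbb{T}^3;\Z)$ has no $2$-torsion, together with the $d_3$-invariant, once $e(\xi)$ is torsion. So for a fixed Euler class $PD(e(\xi))=0$, overtwisted structures are classified by $d_3(\xi)$ alone. More precisely, the homotopy classes of plane fields with a fixed spin$^c$ structure form a $\Z$-torsor, with the action of $\Z$ changing the Hopf invariant by $\pm1$. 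When $c_1$ is torsion, the $d_3$-invariant is a complete invariant of this torsor: the indeterminacy is $\gcd$ of the divisibility of $c_1$, which is zero here.

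First, since $cs_{\pm1}(\mathbb{T}^3,\xi)=3$ means $\xi$ comes from contact $(\pm1)$-surgery on a $3$-component Legendrian link, we have $cs_{\mathbb{Z}}(\xi)\le 3$. We need $cs_{\mathbb{Z}}(\xi)=3$ exactly to invoke Theorem~\ref{thm_cs3}. Since $b_1(\mathbb{T}^3)=3$, any integral surgery diagram needs at least $3$ components: $H_1$ of the surgered manifold is the cokernel of a linking matrix of size equal to the number of components, so rank $3$ forces at least $3$ components. Hence $cs_{\mathbb{Z}}(\xi)=3$. Alternatively, the proof of Theorem~\ref{thm_cs3} presumably only uses that $\xi$ arises from an integral contact surgery on a $3$-component link, which we have directly.

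Second, Theorem~\ref{thm_cs3} gives $d_3(\xi)\in\{-\tfrac12,\tfrac12,\tfrac32\}$. By the classification above, there is at most one overtwisted contact structure (up to isotopy, hence also up to contactomorphism) with $PD(e(\xi))=0$ for each value of $d_3$. This gives at most $3$.

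The main obstacle is justifying carefully that $(e,d_3)$ determines the homotopy class of the plane field on $\mathbb{T}^3$ when $e=0$. This can be done either by citing Gompf's result that for torsion $c_1$ the $3$-dimensional invariant is complete, or by noting that the $\Z$-action by Lutz twists shifts $d_3$ by $1$ and acts freely. If needed, I would also remark that some tight structure may realize one of these values; this only reduces the count and does not affect the upper bound.
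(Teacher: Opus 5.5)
Your proposal is correct and follows the paper's argument. The paper also restricts $d_3$ to $\{\pm\frac12,\frac32\}$ via Cases 4, 7 and 9 of the proof of Theorem~\ref{thm_cs3}, then uses that an overtwisted structure is determined by its homotopy class, which on $\mathbb{T}^3$ with $PD(e(\xi))=0$ is fixed by $d_3$; your extra check that $cs_{\mathbb{Z}}(\mathbb{T}^3,\xi)=3$ simply lets you quote the theorem's statement instead of specializing its proof to $(\pm1)$-coefficients.
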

     
One of the simplest topological surgery descriptions of $\mathbb{T}^3$ is given by the Borromean ring. It is therefore natural to ask which contact structures arise from Legendrian realisations of this link. Surprisingly, this construction never produces tight contact structures; we obtain infinitely many overtwisted contact structures on $\mathbb{T}^3$.

\begin{theorem}\label{lem3}
    Let $\xi$ be any contact structure on $\mathbb{T}^3$, obtained by contact surgery along a Legendrian Borromean ring. Then
    \begin{enumerate}
        \item $\xi$ is overtwisted.
         \item Whenever $PD(\xi)=0$, then $d_3(\xi)=\frac{3}{2}$.
        \item $PD(\xi)=2(i,j,k)$, where $i,j,k\in \mathbb{Z}$. Moreover, for every $3$-tuple $(i,j,k)\in \mathbb{Z}$, we get a contact structure by performing contact surgery along the Legendrian Borromean ring with Euler class $2(i,j,k)$.
    \end{enumerate}
\end{theorem}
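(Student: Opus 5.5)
We would begin by fixing the topological input. Smooth integral surgery on the Borromean rings with all framings $0$ gives $\mathbb{T}^3$, and among integral framings this is the only choice that does, since $H_1$ must be $\mathbb{Z}^3$; so the linking matrix is $0$. Rational contact surgery along a Legendrian Borromean ring $L=L_1\sqcup L_2\sqcup L_3$ with contact coefficients $r_i$ yields the smooth coefficients $tb(L_i)+r_i$. We will therefore assume we are in the situation where this forces $r_i=-tb(L_i)$, possibly after converting rational contact surgeries into sequences of integral ones along push-offs via the Ding--Geiges algorithm. The key invariant is $tb(L_i)$. Each component is an unknot, so $tb(L_i)\le -1$ and the Bennequin bound gives $|rot(L_i)|\le -tb(L_i)-1$. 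Consequently $r_i=-tb(L_i)\ge 1$: every component is surgered with a positive contact coefficient.

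\textbf{Part (1).} We would not try to exhibit an overtwisted disk directly. Instead we would derive overtwistedness from the invariants. A first option is to use the fact that a positive contact surgery on a stabilized Legendrian knot (coefficient $+1$ on a stabilized knot, say) produces an overtwisted structure, but the components need not be stabilized ($tb=-1$ is allowed). So the route we would actually take is via Corollary~\ref{cor1}. If $\xi$ were tight, then after re-enumeration $tb(K^1)\ge 1$, which is impossible because every component is an unknot with $tb\le -1$. Hence $\xi$ is overtwisted. This is clean, provided Corollary~\ref{cor1} applies to arbitrary contact surgery, which is how it is stated.

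\textbf{Parts (2) and (3).} These are computations using the standard formulas for integral contact surgery: $e$ is Poincaré dual to $\sum rot_i\,\mu_i$ in $H_1=\mathbb{Z}^3$, after transforming each coefficient $r_i$ into $\pm1$ surgeries on stabilized push-offs. With linking matrix $0$, the meridians $\mu_i$ generate $H_1$ freely. For the $i$-th component we get contributions $rot(L_i)$ plus the rotation numbers of the chosen stabilizations of the push-offs. For $r_i=n$ we would use $+1$ surgery on $L_i$ and on push-offs with appropriate stabilizations, since $tb+$framing $=0$; the rotation contributions then turn out to have the same parity as $tb(L_i)+1$. Combining this with $rot\equiv tb+1 \pmod 2$ gives even entries, so $PD(e)=2(i,j,k)$. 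For realization, we would take $tb(L_i)=-2m_i-1$, stabilized with the appropriate mix of signs so that $rot(L_i)=2i$ and so on; this requires $|2i|\le 2m_i$, which we can arrange by choosing $m_i$ large. For $d_3$ we would use $d_3=\frac14(c^2-3\sigma-2\chi)+q$, where $q$ counts the $+1$ surgeries. When $e=0$ we have $c^2=0$, since the linking form is degenerate and $c$ evaluates via $rot$ vanishing. The quantities $\sigma$ and $\chi$ are computed from the zero matrix together with the push-off blocks; the expected outcome is that everything cancels to $3/2$, giving $d_3=\frac{3}{2}$ for any valid choice.

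The main obstacle is the bookkeeping for Part (2). Turning $r_i=-tb(L_i)$ into $\pm1$ surgeries yields a larger linking matrix whose signature and the correction term $q$ must combine independently of $tb(L_i)$. We would first verify the case $tb=-1$, $r=1$, which gives three $+1$ surgeries, $q=3$, and $\sigma=0$, so that $\chi=4$ gives $\frac14(0-0-8)+3+$ the standard constant. From there we would show by induction on $-tb$ that each additional stabilization adds to $q$ and to $-\sigma/4$, and to $-\chi/2$, in cancelling amounts.
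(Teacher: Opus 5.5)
The gap is in Part (2): the claim that $e(\xi)=0$ forces $c^2=0$ is false. There is also a smaller slip in the Euler class bookkeeping of Part (3). Part (1) is fine: Corollary \ref{cor1} rests on the case analysis of Theorem \ref{thm_cs3} together with Lemma \ref{lem2}, which is exactly the input the paper uses.

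\textbf{Why $c^2\neq 0$.} For a component with $t_i=tb(L_i)\le -2$, the surgery $L_i(-t_i)$ becomes $L_i(+1)$ plus a once-stabilized push-off with contact coefficient $-1/(-t_i-1)$. The resulting linking block is degenerate but not zero. So $e(\xi)=0$ only says that the rotation vector lies in the image of $Q$, not that it vanishes. Concretely, it forces $rot(L_i)=\pm(t_i+1)\neq 0$. Then $Qb=r$ is solved with $b_i=\mp 1$ on $L_i$ and $0$ on the push-off, so $c^2=\langle r,b\rangle=\sum_i (t_i+1)$. This is negative as soon as some $t_i\le -2$.

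The smallest instance is $t=-2$, $rot=\pm 1$. The $(\pm1)$-expanded block is $\begin{pmatrix}-1&-2\\-2&-4\end{pmatrix}$, and $c^2=-1$. Since the maximal $tb$ of a Legendrian Borromean ring is $-4$, some component always has $tb\le -2$. So your base case with all three $tb=-1$ never occurs, and it is the only configuration in which $c^2=0$ actually holds.

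\textbf{The cancellation mechanism.} The $c^2$ term is exactly what makes $d_3$ independent of the $t_i$, and your description of how the cancellation works is also off. Lowering $t_i$ by one does not change $q$: each component carries exactly one contact $(+1)$-surgery, so $q=3$ throughout. Instead it adds one more contact $(-1)$-surgery on a push-off. That raises $\chi$ by $1$ and lowers $\sigma$ by $1$, which changes $\frac14(-3\sigma-2\chi)$ by $+\frac14$. Only the simultaneous drop of $c^2$ by $1$ compensates for this.

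With $c^2=0$, your value of $d_3$ would exceed the true one by $-\frac14\sum_i(t_i+1)$. For example, it would be off by $\frac34$ for the ring with all $t_i=-2$ and all rotation numbers $\pm1$, a configuration that does occur with $e=0$. To repair this, either carry $c^2=\sum_i(t_i+1)$ through the computation, or read off $d_3=\frac32$ directly from Cases 1--4 of Theorem \ref{thm_cs3}. The latter is what the paper does, and your Part (1) already relies on that analysis anyway.

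\textbf{Part (3).} After the push-off expansion the linking matrix is no longer zero. $H_1$ is generated by the push-off meridians $\mu_i'$, with $\mu_i=t_i\mu_i'$. The $i$-th coordinate of $PD(e)$ is then $-rot(L_i)\pm(-t_i-1)$, not $rot(L_i)$ plus a correction. Your parity argument survives, since $rot(L_i)\equiv t_i+1 \pmod 2$. But choosing $rot(L_i)=2i$ does not produce the coordinate $2i$. The realization has to solve $-rot(L_i)\pm(-t_i-1)=2a$ instead, which is what the paper's table does in terms of the stabilization counts $p_i,s_i$.
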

A direct corollary of the Theorem \ref{lem3} is the following: 
\begin{corollary}
    There is a unique contact structure $(\mathbb{T}^3,\xi)$ obtained by surgery along the Borromean ring with $PD(e(\xi))=0$. 
\end{corollary}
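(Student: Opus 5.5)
The corollary should follow from Theorem \ref{lem3}, parts (2) and (3), plus the classification of contact structures on $\mathbb{T}^3$ by homotopy data. I would show existence and uniqueness separately.

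\emph{Existence.} By part (3) of Theorem \ref{lem3}, taking $(i,j,k)=(0,0,0)$ gives a contact surgery diagram along a Legendrian Borromean ring. The result is a contact structure $\xi_0$ on $\mathbb{T}^3$ with $PD(e(\xi_0))=0$. Concretely, this should come from choosing the Legendrian realization, the stabilizations and the surgery coefficients so that the rotation vector is balanced. One then checks, through the linear-algebra computation of $e(\xi)$ in Theorem \ref{lem3}, that the resulting class vanishes.

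\emph{Uniqueness.} Suppose $\xi,\xi'$ both arise from contact surgery on a Legendrian Borromean ring and both satisfy $PD(e)=0$.
\begin{enumerate}
\item By part (1) of Theorem \ref{lem3}, both $\xi$ and $\xi'$ are overtwisted.
\item By Eliashberg's classification, overtwisted contact structures are determined up to isotopy by their homotopy class as plane fields. So it suffices to show $\xi$ and $\xi'$ are homotopic as plane fields.
\item On a $3$-manifold whose tangent bundle is trivialized, the homotopy class of a plane field is determined by two pieces of data. The first is its $2$-dimensional invariant, the spin$^c$ structure, which is detected by $e(\xi)$ when $H_1$ has no $2$-torsion. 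For $\mathbb{T}^3$ we have $H_1=\mathbb{Z}^3$, so this holds. The second is the $3$-dimensional invariant, which when the Euler class is torsion is measured by $d_3$.
\item Since $e(\xi)=e(\xi')=0$, the $2$-dimensional invariants agree. By part (2) of Theorem \ref{lem3}, $d_3(\xi)=d_3(\xi')=\tfrac32$. Hence $\xi$ and $\xi'$ are homotopic, and therefore isotopic.
\end{enumerate}

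\emph{Main obstacle.} The delicate point is the claim that $e(\xi)$ alone determines the $2$-dimensional part of the homotopy class on $\mathbb{T}^3$. In general, the obstruction-theoretic invariant lives in $H^2$ (equivalently $H_1$), with Euler class equal to twice it. Since $H_1(\mathbb{T}^3)$ is torsion-free, $2\Gamma=0$ forces $\Gamma=0$. I would justify this using Gompf's $\Gamma$-invariant, which in a surgery diagram is computed from the rotation numbers exactly as the Euler class is. Vanishing of the Euler class then forces $\Gamma$ to vanish for a fixed spin structure, so the homotopy classes agree.
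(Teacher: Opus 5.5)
Your proof is correct and follows the argument the paper leaves implicit when it calls this a direct corollary of Theorem \ref{lem3}: existence from part (3) with $(i,j,k)=(0,0,0)$, and uniqueness from overtwistedness (part (1)), $d_3=\frac{3}{2}$ (part (2)), Eliashberg's classification, and the fact that on $\mathbb{T}^3$ a plane field with vanishing Euler class is determined up to homotopy by $e$ and $d_3$ (the same reasoning the paper uses in the proof of Corollary \ref{cor2}). In your last paragraph the only input you need is $2\Gamma(\xi,\mathfrak{s})=PD(e(\xi))$ together with $H_1(\mathbb{T}^3;\mathbb{Z})$ being torsion-free; the diagrammatic formula for $\Gamma$ is not literally the same as the one for the Euler class, but you never use it.
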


\begin{corollary}\label{cor3}
    There exists aleast one overtwisted contact structure $\xi$ with $PD(e(\xi))=0$ and  $cs_{\mathbb{Z}}(\mathbb{T}^3,\xi)=4$. There exist infinitely many overtwisted contact structures $\xi$ with $PD(e(\xi))=0$ and  $cs_{\mathbb{Z}}(\mathbb{T}^3,\xi)\geq 4$.
\end{corollary}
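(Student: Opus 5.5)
The plan is to build one structure from a four-component diagram, and then to use the finiteness in Theorem \ref{thm_cs3} for the second claim.

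\textbf{First claim.} Start with the overtwisted structure $\xi_B$ of Theorem \ref{lem3}. It comes from contact surgery along a Legendrian Borromean ring, is overtwisted by part (1), has $PD(e(\xi_B))=0$, and has $d_3(\xi_B)=\tfrac{3}{2}$ by part (2).

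Next, I will turn this rational surgery into an integer one. I will pick the Legendrian realization so that each component has contact coefficient of the form $\pm1$ or $\pm1+1/n$. I will then use the standard trick that a contact $(+1)$-surgery on a Legendrian unknot $U$ (with $tb=-1$) is a $\mathbb{Z}$-move: adding such a $U$ as a push-off or meridian changes the coefficients. Alternatively, I can take a contact $(r)$-surgery with $r$ an integer, realized directly. Either way, I aim for a contact integer surgery diagram of $\xi_B$ (or of a structure with the same invariants) with exactly four components. This gives $cs_{\mathbb Z}(\xi_B)\le 4$.

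For the lower bound I must exclude $cs_{\mathbb Z}\le 3$. The value $cs_{\mathbb Z}\le 2$ is impossible for purely topological reasons: $H_1(\mathbb{T}^3)=\Z^3$ needs at least three generators, and surgery on an $n$-component link has $H_1$ generated by $n$ meridians. For exactly $3$, Theorem \ref{thm_cs3} only gives $d_3\in\{\pm\tfrac12,\tfrac32\}$, which includes $\tfrac32$.

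So the main obstacle is ruling out $d_3=\tfrac32$ with three integer components. My first plan is to choose a different overtwisted structure with $PD(e)=0$ and $d_3\notin\{\pm\tfrac12,\tfrac32\}$, which avoids the issue. For example, I can start from the Borromean diagram for $\xi_B$ and add a Legendrian unknot with $tb=-1$ far away, with contact surgery coefficient chosen as an integer. Such a surgery gives $\mathbb{S}^3$ with an overtwisted structure, so the connected sum is $\mathbb{T}^3$ with the Euler class unchanged, and $d_3$ shifts by a computable amount. Contact $(+1)$-surgery on the $tb=-1$ unknot gives $\mathbb{S}^1\times\mathbb{S}^2$, which is not what we want. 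Instead I will use contact $(+2)$-type integer surgery, or stabilized unknots with coefficient $+1$ and a cancelling pair, tuned so that the result is $\mathbb{S}^3$ with $d_3$ shifted by, say, $1$.

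The resulting $\xi$ has $PD(e)=0$ and $d_3=\tfrac52$. Theorem \ref{thm_cs3} then forbids integer surgery number $3$, and the diagram has four integer components. Hence $cs_{\mathbb Z}=4$, provided the Borromean part itself was made integer with three components. If that fails, I will fall back to making the three Borromean coefficients integer directly.

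\textbf{Second claim.} Vary the $d_3$-shift: there are infinitely many overtwisted structures $\xi_m$ on $\mathbb{T}^3$ with $PD(e)=0$, for instance from Lutz twists or connected sums with overtwisted $\mathbb{S}^3$ structures, realizing all $d_3\in \tfrac32+\Z$. Eltashberg's classification says they are determined by homotopy class, so they are pairwise distinct. Only $d_3\in\{\pm\tfrac12,\tfrac32\}$ can have $cs_{\mathbb Z}=3$ by Theorem \ref{thm_cs3}, and $cs_{\mathbb Z}\le2$ never occurs. Hence all the others have $cs_{\mathbb Z}\ge4$.
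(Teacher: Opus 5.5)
Your first claim follows the paper's route: take the Borromean structure $\xi_B$ with $PD(e)=0$ and $d_3=\frac32$, connect-sum it with an $\mathbb{S}^3$ obtained from one extra integer contact surgery, and use Theorem~\ref{thm_cs3} together with $H_1(\mathbb{T}^3)=\mathbb{Z}^3$ to rule out three components. Your step of making the Borromean part integer is unnecessary. By Lemma~\ref{lem1} the topological coefficients are $0$, so the contact coefficients are already $-tb(U^i)\in\mathbb{Z}$, and $cs_{\mathbb{Z}}(\xi_B)=3$.

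\textbf{The gap.} The extra component carries the entire upper bound, and you never actually choose it. The option you describe concretely fails. A $tb=-1$ unknot $K$ gives $\mathbb{S}^3$ under integer contact surgery only for coefficient $+2$. By the translation lemma, $K(+2)\cong K(+1)\sqcup K'_1(-1)$, which is a Stein $1$-handle followed by Legendrian surgery. The result is Stein fillable, hence $(\mathbb{S}^3,\xi_{std})$; Lemma~\ref{lem:d3} confirms $d_3=0$ for either stabilization. So your connected sum is just $\xi_B$ again, with $cs_{\mathbb{Z}}=3$. Adding a ``cancelling pair'' instead would push the count above four and lose the bound $cs_{\mathbb{Z}}\le 4$.

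Overtwistedness of the summand is not what matters, since $\xi_B$ is already overtwisted. What you need is a single knot whose $d_3$ moves $\frac32$ out of $\{\pm\frac12,\frac32\}$. The option you mention in passing does this: contact $(+1)$-surgery on the once-stabilized unknot ($tb=-2$, $rot=\pm1$).
\begin{itemize}
\item Topologically it is $(-1)$-surgery on the unknot, so it gives $\mathbb{S}^3$.
\item Lemma~\ref{lem:d3} with $Q=(-1)$, $b=\mp1$, $\sigma(Q)=-1$ gives $d_3=\frac14(-1+2)+\frac34=1$.
\item The resulting four-component integer diagram has $PD(e)=0$ and $d_3=\frac52$, so Theorem~\ref{thm_cs3} forces $cs_{\mathbb{Z}}=4$.
\end{itemize}
This is exactly the paper's construction. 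Theorem~\ref{thm_cs3}, which you cite, is the result that supplies the lower bound.

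\textbf{Second claim.} Here your route differs from the paper's but is valid. The paper iterates the connected sum with this $\mathbb{S}^3$ to get $d_3=\frac{2n+1}{2}$. You instead invoke the existence of an overtwisted contact structure in every homotopy class of plane fields with $PD(e)=0$ (Lutz--Martinet/Eliashberg). Theorem~\ref{thm_cs3} then excludes $cs_{\mathbb{Z}}=3$ whenever $d_3\notin\{\pm\frac12,\frac32\}$. Your version needs no diagrams but depends on that existence theorem; the paper's gives explicit upper bounds. Your logic about distinctness is reversed, though. The structures are pairwise distinct, even up to contactomorphism, because their $d_3$-invariants differ. Eliashberg's uniqueness statement is not needed; you only need existence.
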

We conclude this section with the following questions.

\begin{ques}
    Find out the contact surgery diagrams for tight contact structures  $(\mathbb{T}^3,\xi_n)$, $n>1$. Is the $cs_{\mathbb{Z}}(\mathbb{T}^3,\xi_n)=3$ for all $n$?
\end{ques}

\begin{ques}
   Let $L=K^1\sqcup K^2\sqcup K^3$ be a link with $3$-components with $\Bar{tb}(L)=\sum_{i=1}^3\Bar{tb}(K^i)$. If surgery along $L$ produces $\mathbb{T}^3$, is it true that the maximal $\Bar{tb(K^i)}<1$ for at least two $i$ s?
\end{ques}

\section{Acknowledgement}
MY is supported by the National Science Centre (NCN), Poland, under the OPUS grant 2024/53/B/ST1/03470 at the University of Warsaw. PD wants to thank the Statistics and Mathematics Unit, Indian Statistical Institute, Kolkata, for their support. The authors thank Marc Kegel for his careful reading of an earlier version of the manuscript and for many helpful comments that improved the exposition.

\section{Preliminaries}

We briefly review the necessary background on contact Dehn surgery, a more detailed discussion can be found in \cite{Gompf_Stein,DingGeiges,Ding_Geiges_Stipsciz_surgery_diagrams,Ozbagci_Stipsicz_book,Geiges,Kegel-Durst,Kegel_thesis,Kegel_knot_complement,KegelEtnyreOnaran,Casals_Etnyre_Kegel}.

Let $K$ be a Legendrian knot in a contact $3$-manifold $(M,\xi)$. There exists a tubular neighbourhood $\nu K$ of $K$ with a convex boundary. Let $\mu$ denote the meridian of $K$ and $\lambda_c$ be the \emph{contact longitude} of $K$, which is obtained by pushing $K$ to the boundary of $\nu K$ in the direction of the Reeb vector field. A \emph{contact Dehn surgery}  with contact surgery coefficient $p/q$ along $K$ is performed by removing a standard tubular neighbourhood of $K$ and glueing back a solid torus $\mathbb{S}^1\times D^2$ via a diffeomorphism sending $\{pt\}\times \partial D^2$ to $p\mu+q\lambda_c$.

The resulting manifold admits contact structures that agree with $\xi$ outside the surgery region and are tight on the attached solid torus. The resulting contact structure need not be unique for a general contact surgery coefficient $p/q$; rather, there are finitely many possibilities of contact structures determined by the continued fraction expansion of $p/q$ (see \cite{DingGeiges}). However, Honda~\cite{Honda} and Giroux~\cite{Giroux_lens} showed that such contact structures on $\mathbb{S}^1\times \mathbb{D}^2$ always exist and are unique when $p=\pm1$.

Note that the \textit{Seifert longitude} $\lambda_s$, obtained by pushing $K$ into a Seifert surface, satisfies the relation $\lambda_c=\lambda_s+\tb(K)\mu$
where $\tb(K)$ is the Thurston–Bennequin invariant of $K$. Thus the \textit{topological surgery coefficient} $r_t$, which is measured with respect to the Seifert longitude $\lambda_s$, and the contact surgery coefficient $r_c$ are related by $r_c=r_t-\tb(K)$. Note that all contact surgery diagrams representing contact $(\pm1)$-surgery
along Legendrian knots in $(\mathbb{S}^3,\xi_{\mathrm{std}})$ are drawn as
front ($xz$) projections in $(\mathbb{R}^3,\xi_{\mathrm{std}}
=\ker(dz-y\,dx))$.

Now we state the following theorem, already mentioned in the introduction.

\begin{theorem}[Ding–Geiges \cite{DingGeiges}]\label{thm:lik-wallace-contact}
Let $(M,\xi)$ be a contact 3-manifold. Then $(M,\xi)$ can be obtained by rational contact Dehn surgery along a Legendrian link in $(\mathbb{S}^3,\xi_{std})$. Moreover, one can assume all contact surgery coefficients to be of the form $(\pm1)$.\qed
\end{theorem}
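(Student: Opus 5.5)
The plan is to go through open books and the Giroux correspondence rather than through the original argument of Ding and Geiges. It gives directly the stronger conclusion, with all coefficients $\pm1$, and rational coefficients then come for free.

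\textbf{Step 1: an open book with connected binding.} By the Giroux correspondence, $\xi$ is supported by some open book $(\Sigma,\phi)$ of $M$. After positive stabilizations, which do not change the supported contact structure, I will assume that the binding is connected. Then $\Sigma$ is a surface of genus $g$ with one boundary component. I will also write the monodromy as a product
\[
\phi=\tau_{c_1}^{\epsilon_1}\cdots\tau_{c_m}^{\epsilon_m},\qquad \epsilon_i=\pm1,
\]
of Dehn twists along non-separating simple closed curves $c_i\subset\Sigma$. This is possible because the mapping class group of $\Sigma$ is generated by twists along Humphries-type non-separating curves.

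\textbf{Step 2: the identity monodromy.} The open book $(\Sigma,\mathrm{id})$ supports $(\#^{2g}\mathbb{S}^1\times\mathbb{S}^2,\xi_{std})$. I will realize it concretely inside $(\mathbb{S}^3,\xi_{std})$. Take the open book of $\mathbb{S}^3$ obtained by positively stabilizing the disk open book $2g$ times. Its page is $\Sigma$ and its monodromy is $\tau_{a_1}\cdots\tau_{a_{2g}}$ for suitable curves $a_j$. Each $a_j$ can be Legendrian realized on a page as a standard unknot with $\tb=-1$, and the page framing equals the contact framing. Contact $(+1)$-surgery on a curve lying on a page multiplies the monodromy by a negative Dehn twist along that curve. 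So $(+1)$-surgery on the $a_j$ cancels the stabilizations and gives $(\Sigma,\mathrm{id})$.

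\textbf{Step 3: adding the twists.} The curves $c_i$ are non-separating on $\Sigma$. By Honda's Legendrian realization principle, each $c_i$ can therefore be realized as a Legendrian knot on a page, and I place the $c_i$ on distinct pages. Each of these knots then has contact framing equal to page framing. Contact $(-1)$-surgery on $c_i$ composes the monodromy with $\tau_{c_i}$, and contact $(+1)$-surgery composes it with $\tau_{c_i}^{-1}$. Performing $(-\epsilon_i)$-surgery on every $c_i$ produces an open book with monodromy $\phi$. The contact structure obtained is supported by $(\Sigma,\phi)$, so it is isotopic to $\xi$. Since all the curves live in $\mathbb{S}^3$, this exhibits $(M,\xi)$ as contact $(\pm1)$-surgery on a Legendrian link in $(\mathbb{S}^3,\xi_{std})$. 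The first statement, with rational coefficients, follows immediately.

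\textbf{Main obstacle.} The delicate point is Step 3. The realization principle is applied on the page of $(\Sigma,\mathrm{id})$, which lives in the surgered manifold, while the link has to be drawn in $\mathbb{S}^3$ disjoint from the $a_j$. My plan is to realize each $c_i$ on a page of the stabilized open book of $\mathbb{S}^3$, on pages disjoint from those carrying the $a_j$. The surgeries are supported in neighbourhoods of these pages, so they commute and compose in the stated order. What needs checking is that the page framings are compatible before and after surgery, which holds because all of these framings are measured relative to the same page.
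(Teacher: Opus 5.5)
Your argument is correct, but it follows a different route from the one behind this statement. The paper gives no proof here; it only cites Ding and Geiges, whose argument, in outline, is surgery-theoretic:
\begin{itemize}
\item they take a topological surgery presentation of $\mathbb{S}^3$ from $M$ (Lickorish--Wallace);
\item they realize the surgery link as a Legendrian link in $(M,\xi)$ and perform contact surgeries with non-zero rational coefficients, landing in some $(\mathbb{S}^3,\xi')$;
\item they invert this with the cancellation lemma, and pass between $\xi'$ and $\xi_{std}$ using Eliashberg's classification of overtwisted contact structures by homotopy class;
\item they extract the $(\pm1)$-version from the rational one via the replacement and translation lemmas quoted right after the theorem.
\end{itemize}

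You instead use Giroux's existence theorem, a factorization of the monodromy into twists about non-separating curves, Legendrian realization on pages, and the dictionary between contact $(\mp1)$-surgery on a page-framed Legendrian curve and composing the monodromy with $\tau^{\pm1}$. This gives $(\pm1)$-coefficients directly, with an explicit link of $2g+m$ components read off from open book data. It also isolates where positive surgeries are forced: only on the $2g$ curves $a_j$ that produce $(\#^{2g}\mathbb{S}^1\times\mathbb{S}^2,\xi_{std})$. So when $\phi$ is a product of positive twists, every other component is a Legendrian surgery.

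The cost is reliance on the much deeper existence half of the Giroux correspondence and on convex surface theory. The Ding--Geiges route needs only elementary surgery calculus plus Eliashberg's classification. It also produces exactly the cancellation, replacement and translation lemmas that the paper uses in all its Euler class and $d_3$ computations.

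Two points in your write-up should be made explicit, though both are standard. First, the pages carrying the $a_j$ and the $c_i$ must be ordered so that the twists compose to $\tau_{a_1}\cdots\tau_{a_{2g}}\,\tau_{a_{2g}}^{-1}\cdots\tau_{a_1}^{-1}\,\phi=\phi$, with your chosen composition convention. Second, several non-separating curves on distinct pages can be Legendrian realized simultaneously. One way is to choose the Thurston--Winkelnkemper form so that the page form at each relevant page vanishes on the curve lying there. This is possible because each curve is non-separating, so its period can be killed by adding a closed form supported near a dual curve.
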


Before we state the lemmas needed to convert contact surgery coefficients from integers to rational numbers and vice versa, we introduce some notation. For $m,n\in \mathbb{N}0$, let $K_n$ denote a Legendrian knot which is obtained by adding $n$ stabilizations to $K$ and further $K_{n,m}$ denote a Legendrian knot which is obtained by adding $m$ extra stabilizations to $K_n$. Let $K{\def\svgwidth{1,6ex}\,\,\input{PushOff.pdf_tex}\,\,} K$ denote the Legendrian link consisting of $K$ and a Legendrian knot obtained by pushing $K$ in the direction transverse to contact planes.

\begin{lemma} [Ding–Geiges~\cite{Ding_Geiges_torus_bundles,Ding_Geiges_Stipsciz_surgery_diagrams}]\label{lem:Kirby}
Let $K$ be a Legendrian knot in $(\mathbb{S}^3,\xi_{std})$.
\begin{enumerate}
\item \textbf{Cancellation lemma:} For all $n\in \mathbb{Z}-\{0\}$, we have
$$K\left(\frac{1}{n}\right){\def\svgwidth{1,6ex}\,\,\input{PushOff.pdf_tex}\,\,} K\left(-\frac{1}{n}\right)\cong (\mathbb{S}^3,\xi_{std}).$$
\item \textbf{Replacement lemma:} For all $n\in \mathbb{Z}-\{0\}$, we have
$$K\left(\pm\frac{1}{n}\right)\cong K(\pm 1){\def\svgwidth{1,6ex}\,\,\input{PushOff.pdf_tex}\,\,}\cdots{\def\svgwidth{1,6ex}\,\,\input{PushOff.pdf_tex}\,\,} K(\pm 1).$$
\item \textbf{Translation lemma:} For $r\in \mathbb{Q}-\{0\}$ and $k \in \mathbb{Z}$, we have
$$K(r)\cong K\left(\frac{1}{k}\right){\def\svgwidth{1,6ex}\,\,\input{PushOff.pdf_tex}\,\,} K\left(\frac{1}{
\frac{1}{r}-k}\right).$$
In the case when $r<0$, $r$ can be uniquely written as
\begin{align*}
r=[r_1+1,\ldots,r_n]:=r_1+1-\frac{1}{r_2-\frac{1}{\cdots-\frac{1}{r_n}}}
\end{align*} with integers $r_1,\ldots,r_n\leq -2$ and then we have
\begin{align*}
K(r)\cong K_{|2+r_1|}(-1){\def\svgwidth{1,6ex}\,\,\input{PushOff.pdf_tex}\,\,} K_{|2+r_1|,|2+r_2|}(-1){\def\svgwidth{1,6ex}\,\,\input{PushOff.pdf_tex}\,\,}
\cdots {\def\svgwidth{1,6ex}\,\,\input{PushOff.pdf_tex}\,\,} K_{|2+r_1|,|2+r_2|,\ldots, |2+r_n| }(-1).
\end{align*}
\end{enumerate}
In addition, all these results hold in a tubular neighbourhood of $K$. In particular, they can be applied to knots in larger contact surgery diagrams.\qed
\end{lemma}

Next, we continue our discussion with the homotopical invariants of the underlying tangential $2$-plane field of a contact structure. It is known that a tangential $2$-plane field $\xi$ on $M$ is completely determined (up to homotopy) by the $d_3$-invariant and Gompf’s $\Gamma$-invariant~\cite{Gompf_Stein,DingGeigesStipsciz}.
The $\Gamma$-invariant encodes $\xi$ on the $2$-skeleton of $M$, while the $d_3$-invariant specifies $\xi$ on the $3$-cell. The $\Gamma$-invariant is equivalent to the Euler class whenever there is no $2$-torsion in $H_1(M;\mathbb{Z})$. Since in our case, $H_1(\mathbb{T}^3,\mathbb{Z})=\mathbb{Z}\oplus \mathbb{Z}\oplus \mathbb{Z}$, we will be only working with the Poincaré dual of the Euler class $e$ and the $d_3$-invariant.

The contact structure obtained by contact surgery with coefficients $\pm \frac{1}{n}, n\in \mathbb{N}$ is unique. Therefore, whenever we have a contact surgery diagram with contact surgery coefficients $\pm \frac{1}{n}$, we can compute the homotopical invariants $e$ and $d_3$-invariant using the following lemma ~\cite{Ding_Geiges_Stipsciz_surgery_diagrams,Kegel-Durst}.

\begin{lemma}\label{lem:d3}
Let $L=L_1\cup\cdots\cup L_k$ be an oriented Legendrian link in $(S^3,\xi_{std})$ and let $(M,\xi)$ be the contact manifold obtained by contact $\left(\pm\frac{1}{n}\right)$-surgeries along $L$. Let $t_i$ and $r_i$ be the Thurston–Bennequin invariant and rotation number of $L_i$ for all $i=1,\ldots,k$. Let $l_{ij}$ be the linking number of $L_i$ with $L_j$ and let $\frac{p_i}{q_i}=t_i\pm \frac{1}{n_i}$ be the topological surgery coefficient of $L_i$. We define the linking matrix $Q$ by
\begin{align*}
Q=\begin{pmatrix}
p_1&q_2l_{12}&\cdots& q_kl_{1k}\\
q_1l_{12}&p_2&\cdots &q_kl_{2k}\\
\vdots&\vdots&\ddots&\vdots\\
q_1l_{1k}& q_2l_{2k}&\cdots &p_k
\end{pmatrix} .
\end{align*}
\begin{enumerate}
\item The first homology $H_1(M;\Z)$ is presented  by the abelian group $\langle\mu_i| Q\mu^T=0\rangle$, where $\mu=(\mu_1,\ldots,\mu_k)$ is the vector of meridians $\mu_i$ of $L_i$.
\item The Poincaré dual of the Euler class is given by $PD(e(\xi))=\sum_{i=1}^k n_ir_i\mu_i$.
\item If there exists a rational solution $b \in \Q^k$ of $Qb = r$, where $r=(r_1,\ldots,r_k)^T$, the $d_3$-invariant is well defined and is computed as
$$\frac{1}{4}{\langle r,b\rangle-3\sigma(Q)-2\operatorname{rk}(Q)-2}-\frac{1}{2}+q$$
where $\langle \cdot,\cdot\rangle$ simply denotes the vector dot product, $\sigma(Q)$ and $\operatorname{rk}$ denote the signature and rank of $Q$, and $q$ is the number of positive contact surgeries in the diagram. Equivalently, the formula for the $d_3$-invariant in terms of the surgery coefficients is given by
\[\frac{1}{4}\sum_{i=1}^k \left(n_ib_ir_i+(3-n_i)sign(n_i)\right)-\frac{3\sigma(Q)}{4}.\]
\qed
\end{enumerate}
\end{lemma}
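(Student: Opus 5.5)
The plan is to reduce to the case of contact $(\pm1)$-surgeries, where the formulas are classical (Ding–Geiges–Stipsicz, via Gompf's formula for Stein/almost complex $4$-manifolds), and then translate everything back into the data of the original link with coefficients $\pm\frac{1}{n_i}$.

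\textbf{Step 1: replacement.} By the replacement lemma in Lemma~\ref{lem:Kirby}, contact $(\pm\frac{1}{n_i})$-surgery on $L_i$ is the same as contact $(\pm1)$-surgery on $n_i$ Legendrian push-offs $L_i^{(1)},\dots,L_i^{(n_i)}$. Each push-off has $\tb=t_i$ and $\mathrm{rot}=r_i$. Any two push-offs of $L_i$ link $t_i$ times, and a push-off of $L_i$ links a push-off of $L_j$ exactly $l_{ij}$ times. This gives an expanded $(\pm1)$-diagram $L'$ with $N=\sum n_i$ components. Its integral linking matrix $Q'$ has the following form. The diagonal block for $L_i$ is $t_i J+(\pm1)I$, where $J$ is the all-ones matrix. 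The off-diagonal blocks are $l_{ij}J$.

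\textbf{Step 2: invariants of the expanded diagram.} For $(\pm1)$-surgery along $L'$ we have the standard facts:
\begin{itemize}
\item $H_1$ is presented by $Q'$ on the meridians $\mu_i^{(s)}$;
\item $PD(e)=\sum r_i^{(s)}\mu_i^{(s)}$;
\item $d_3=\frac14(c^2-3\sigma(Q')-2\chi)+q'$, where $c^2=\langle r',b'\rangle$ for a solution of $Q'b'=r'$, $\chi=N+1$ is read off from the $4$-manifold built by attaching $2$-handles, and $q'$ is the number of $(+1)$-components.
\end{itemize}
I will take these as known, the $(+1)$ components being handled by the Ding–Geiges–Stipsicz correction term $q'$.

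\textbf{Step 3: row reduction back to $Q$.} Within each block, subtract the first push-off row and column from the others. In homology this identifies all $\mu_i^{(s)}$ with a single class up to the $\pm1$ relations. The result is that $Q'$ presents the same group as $\langle\mu_i\mid Q\mu^T=0\rangle$, with $\mu_i^{(s)}\mapsto\mu_i$; equivalently, this is a Rolfsen twist converting the $n_i$ parallel $\pm1$ curves into one curve with coefficient $t_i\pm\frac1{n_i}=\frac{p_i}{q_i}$, where $q_i=n_i$. This gives (1). It also turns $\sum_s r_i\mu_i^{(s)}$ into $n_ir_i\mu_i$, giving (2).

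\textbf{Step 4: the $d_3$-invariant.} By symmetry, the solution $b'$ of $Q'b'=r'$ is constant on each block, $b'_{i,s}=\beta_i$. Substituting, the block equations become exactly $Qb=r$ with $b_i=\beta_i$ (after multiplying by $q_i=n_i$). Hence $\langle r',b'\rangle=\sum n_ir_ib_i$.

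What remains, and what I expect to be the main obstacle, is the bookkeeping relating $\sigma(Q')$, $\operatorname{rk}(Q')$ and $q'$ to $\sigma(Q)$, $\operatorname{rk}(Q)$ and the number of positive surgeries. Diagonalizing each block, $t_iJ\pm I$ contributes $n_i-1$ eigenvalues equal to $\pm1$, plus one eigenvalue that congruence-reduces to the corresponding entry of $Q$. So $\sigma(Q')=\sigma(Q)+\sum(n_i-1)\,\mathrm{sign}(n_i)$, with $\mathrm{sign}(n_i)$ recording the sign of the surgery, and $q'=\sum_{+}n_i$. Substituting these counts, together with $\chi=N+1$, into Step 2 should collapse to the stated expression
\[
\frac14\sum_i\bigl(n_ib_ir_i+(3-n_i)\,\mathrm{sign}(n_i)\bigr)-\frac{3\sigma(Q)}{4}.
\]
I would verify this carefully in the single-component case $n=1$ and $n=2$ first, to fix the sign and constant conventions, before doing the general computation.
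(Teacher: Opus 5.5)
The paper gives no proof of this lemma; it quotes it from Ding--Geiges--Stipsicz and Durst--Kegel. Your reduction to contact $(\pm1)$-surgery via the replacement lemma is the natural way to derive it, and Steps 1--3 are sound. However, the step you left open does not close as you predict.

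\textbf{The constant.} Write $\epsilon_i=\pm1$ for the sign of the $i$-th surgery. Use your Step 2 formula with your counts $\langle r',b'\rangle=\sum_i n_ir_ib_i$, $\sigma(Q')=\sigma(Q)+\sum_i(n_i-1)\epsilon_i$, $q'=\sum_i n_i\frac{1+\epsilon_i}{2}$ and $\chi=N+1$. You get exactly
\[
\frac14\Bigl(\langle r',b'\rangle-3\sigma(Q')-2(N+1)\Bigr)+q'
=\frac14\sum_{i=1}^k\bigl(n_ib_ir_i+(3-n_i)\epsilon_i\bigr)-\frac{3\sigma(Q)}{4}-\frac12 ,
\]
which is the stated expression minus $\frac12$. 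This is a normalization issue, not an arithmetic slip. The formula $\frac14(c^2-3\sigma-2\chi)+q$ is normalized so that $d_3(S^3,\xi_{std})=-\frac12$, whereas the stated expression gives $0$ for the empty link. The paper uses the latter convention throughout: it assigns $d_3=\frac12$ to the tight $\mathbb{T}^3$, whose Stein filling $T^2\times D^2$ has $\frac14(c^2-3\sigma-2\chi)=0$. So you must adopt that convention explicitly, i.e.\ add $\frac12$ in Step 2; otherwise the identity you claim is false. The same check shows that the first displayed expression in the statement (with $\operatorname{rk}(Q)$ and the extra $-\frac12$) is not equivalent to the second. The two already disagree for contact $(+1)$-surgery on the $tb=-1$ unknot. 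So aim only at the second formula, and say so.

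\textbf{The signature.} Diagonalizing each block separately ignores the off-diagonal blocks $l_{ij}J$. Moreover $Q_{ij}=n_jl_{ij}$ while $Q_{ji}=n_il_{ij}$, so $Q$ is not symmetric and $\sigma(Q)$ needs a meaning. Here is an argument that handles both points.
Let $v_i$ be the indicator vector of block $i$, and $W_i$ the vectors supported in block $i$ with coordinate sum $0$. For $w\in W_i$ we have $Jw=0$, hence $Q'w=\epsilon_iw$. Therefore $W_i$ is $Q'$-orthogonal to every $v_j$ and to $W_j$ for $j\neq i$, and $Q'|_{W_i}=\epsilon_iI$.
On $\operatorname{span}(v_1,\dots,v_k)$ the Gram matrix of $Q'$ has entries $n_ip_i$ and $n_in_jl_{ij}$. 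This is $D_nQ$ with $D_n=\operatorname{diag}(n_1,\dots,n_k)$, which is symmetric. Also $Q$ is similar to the symmetric matrix $D_n^{-1/2}(D_nQ)D_n^{-1/2}$, so $Q$ has real eigenvalues and its signature equals $\sigma(D_nQ)$. Sylvester's law then gives $\sigma(Q')=\sigma(Q)+\sum_i(n_i-1)\epsilon_i$.

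\textbf{Smaller points.} In Step 4 there is nothing to multiply by $q_i$. Since $p_i=n_it_i+\epsilon_i$ and $Q_{ij}=n_jl_{ij}$, the block equations are literally $(Q\beta)_i=r_i$; multiplying by $q_i$ would change the right-hand side.
Block-constancy of $b'$ is forced, not a symmetry choice: subtracting two rows of a block gives $\epsilon_i(b'_{i,s}-b'_{i,1})=0$. Conversely, every solution of $Qb=r$ lifts to a solution of $Q'b'=r'$, which makes the two solvability hypotheses equivalent.
Finally, (1) and (2) hold with $\mu_i$ the common class of the push-off meridians $\mu_i^{(s)}$, which is what your Step 3 actually proves. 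Do not use the Rolfsen-twist remark to identify $\mu_i$ with the meridian of the single rational-surgery curve. That meridian equals $\sum_s\mu_i^{(s)}=n_i\mu_i$ in $H_1(M)$, and in those generators the relation matrix is $Q^T$ and $PD(e)=\sum_i r_i\mu_i$.
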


Next, we discuss some background on Legendrian knots in $\#^k(\mathbb{S}^1\times\mathbb{S}^2)$ and their ruling invariants, which will be needed later for Theorem \ref{stein}. There is a unique tight contact structure on $\#^k(\mathbb{S}^1\times\mathbb{S}^2),\: k\geq 0$. In this paper, we consider Legendrian links in $\#^k(\mathbb{S}^1\times\mathbb{S}^2)$ with the standard contact structure. Every Legendrian link in $\#^k(\mathbb{S}^1\times\mathbb{S}^2)$ can be described in the $xy$- and $xz$-diagrams using tangles in normal form as defined by Gompf \cite{Gompf_Stein}. An example is shown in Figure \ref{fig:kirby}. For this paper, we only work with $xz$-diagrams.

\begin{definition}
Let $A,M>0$, consider $[0,A]\times[-M,M]\times[-M,M]$ with the contact structure $dz-ydx$. A Legendrian tangle $T$ in $A\times[-M,M]\times[-M,M]$ is said to be in \emph{normal form} if $T$ meets $x=0$ and $x=A$ in $k$ groups of strands, where the groups are of size $N_1,\ldots, N_l$ in the $xz$- projections, and within each of the $l$-th group, the strands are labeled by $1,\ldots,N_l$ from top to bottom at $x=0$ and $x=A$.
\end{definition}

For every Legendrian link in $\#^k(\mathbb{S}^1\times\mathbb{S}^2)$ there exists a tangle in the normal form such that joining the strands with the same label in each group gives the given Legendrian link (see \cite{Gompf_Stein}). Also, we can pass from an $xz$-diagram in normal form to a contact surgery diagram by joining the strands with the same label and replacing each $1$-handle with the standard Legendrian unknot; see Figure~\ref{fig:kirby}.

In \cite{Ekholm_Ng},  Ng and Ekholm, defined a DGA generated over $\mathbb{Z}[t_1^{\pm1},\ldots,t^{\pm1}_s]$ assocaited with a Legendrian link ($s$-components) in $\#^k(\mathbb{S}^1\times\mathbb{S}^2)$. They showed that the homology associated with the DGA is an invariant of Legendrian isotopy. Later, in \cite{Leverson}, Leverson defined $\rho$-graded normal rulings for Legendrian links and $\rho$-graded augmentation associated with their DGAs and showed that the existence of a $\rho$-graded ruling and the number of $\rho$-graded normal rulings is an invariant of Legendrian knots. Before defining the normal rulings, first we describe the Maslov potential associated with the $xz$-diagram in the normal form below.

Let $L=\sqcup_{i=1}^s L_i$ be a Legendrian link in $\#^k(\mathbb{S}^1\times\mathbb{S}^2)$ and let $r(L):=gcd(rot(L_1),\ldots, rot(L_s))$. Let $T$ be a tangle in the $xz$-diagram of $L$ in the normal form. Then a \emph{Maslov potential} $m$ associates an integer modulo $2r(L)$ to each strand in the tangle $T$ minus cusps such that the following conditions hold:
\begin{enumerate}
    \item Each strand with the same label passing through the same $1$-handle at $x=0,A$ must have the same Maslov potential,
    \item if a strand is oriented such that it enters the $1$-handle at $x=A$ and exits at $x=0$, then
the Maslov potential of the strand must be even. Otherwise, the Maslov potential of the strand must be odd,
\item at a cusp, the upper strand has Maslov potential one more than the lower strand.
\end{enumerate}
Note that the Maslov potential is well-defined up to an overall shift by an even integer for knots. Now we define a normal ruling from \cite{Leverson}.

\begin{definition}
    Consider a Legendrian link $L$ in $\#^k(\mathbb{S}^1\times\mathbb{S}^2)$ and tangle portion of the $xz$-diagram in normal form of $L$. We consider paths in the $xz$-diagram minus the cusps. A \emph{normal ruling} is a decomposition of the ($xz$-diagram)-minus the cusps into pairs of paths with the conditions described below. Let $\alpha$ be a path and let $\alpha_c$ denote its companion path, then the pairings satisfy the following conditions.

\begin{enumerate}
\item Paths $\alpha$ and $\alpha_c$ may meet at at most one left cusp and at most one right cusp. Moreover, if they meet at both a left cusp and a right cusp, then neither $\alpha$ nor $\alpha_c$ passes through a $1$-handle.

\item If $\alpha$ passes through a $1$-handle at $x=0$, then its paired path $\alpha_c$ must pass through the same $1$-handle at $x=0$. Likewise, if $\alpha$ passes through a $1$-handle at $x=A$, then $\alpha_c$ must pass through the same $1$-handle at $x=A$.

\item If neither $\alpha$ nor $\alpha_c$ passes through a $1$-handle, then the union $\alpha\cup\alpha_c$ bounds an embedded disc in the $xz$-diagram. If $\alpha$ and $\alpha_c$ pass through a $1$-handle, then $\alpha\cup\alpha_c$, together with the corresponding $1$-handle, bounds an embedded disc in $xz$-diagram.

    \item \textbf{Crossings.}
    Paired paths do not intersect at crossings. Suppose the paths labeled $\alpha$ and $\alpha'$ meet at a crossing, then exactly one of the following occurs:
    \begin{itemize}
        \item \emph{switched crossing}: in a neighbourhood of the crossing $\alpha$ completely lies above (or below) $\alpha'$, or
        \item \emph{unswitched crossing}: At the crossing $\alpha$ and $\alpha'$ cross each other.
    \end{itemize}

    \item \textbf{Normality.}
    At every switched crossing, the companion strands must satisfy a standard normal configuration: If $\alpha$ and $\alpha'$ meet at a switched crossing, then the disc bounded by $\alpha\cup \alpha_c$ and $\alpha'\cup \alpha_c'$ in a neighbourhood of the switched crossing is either disjoint or nested.

\end{enumerate}
\end{definition}

To each crossing, in the $xz$-diagram, we can assign a \emph{grading} which is given by the Maslov potential of the upper strand minus the Maslov potential of the lower strand.

\begin{definition}
    Let $\rho| 2r(L)$. We say that a normal ruling is \emph{$\rho$-graded} if the grading of every switched crossing is divisible by $\rho$. We say a $\rho$-graded ruling is \emph{graded} if $\rho=0$.
\end{definition}

Now we state a result from \cite{Leverson}, which we will be using to prove Theorem \ref{stein}.

\begin{theorem}[Corollary 1.4, \cite{Leverson}]\label{Graded}
    If $X$ is the Weinstein $4$-manifold that results from attaching $2$-handles along a Legendrian link $L$ to $\#^k(\mathbb{S}^1\times \mathbb{S}^2)$ and $L$ has a graded normal ruling, then the full symplectic homology $S\mathbb{H}(X)$ is non-zero.
\end{theorem}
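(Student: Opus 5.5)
The plan is to reduce the statement to the existence of an augmentation of the Legendrian contact homology DGA, and then to invoke the surgery formula for symplectic homology. Throughout, let $L\subset \#^k(\mathbb{S}^1\times\mathbb{S}^2)$ be the attaching link and let $(\mathcal{A}(L),\partial)$ be the Ekholm--Ng DGA of $L$, computed from an $xz$-diagram in Gompf's normal form. Let $X$ be the Weinstein manifold obtained from the subcritical piece $\natural^k(\mathbb{S}^1\times D^3)$ by attaching $2$-handles along $L$.

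\textbf{Step 1 (ruling gives augmentation).} I would first show that a graded normal ruling of the normal-form diagram produces a graded augmentation
\[
\varepsilon\colon(\mathcal{A}(L),\partial)\to(\mathbb{F},0),\qquad \varepsilon\circ\partial=0,\quad \varepsilon(1)=1 .
\]
This is the $\#^k(\mathbb{S}^1\times\mathbb{S}^2)$ analogue of the Fuchs and Sabloff correspondence in $\mathbb{R}^3$. The construction I would try first follows Sabloff's recipe:
\begin{itemize}
\item send each switched crossing (these have grading $0$ for a graded ruling) to $1$;
\item send the crossings and right cusps that are forced by the ruling disks to $1$;
\item define $\varepsilon$ on the generators coming from the $1$-handles (the infinitely many internal generators $c^i_{jk;p}$ of Ekholm--Ng) so that it is the identity-type pattern determined by the pairing of strands through each handle.
\end{itemize}
This choice is compatible because condition (2) of the definition of a normal ruling says that companion paths pass through the same $1$-handle together.

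\textbf{Step 2 (verify the augmentation equation).} Next I would check $\varepsilon\circ\partial=0$ generator by generator.
\begin{itemize}
\item For crossings and cusps in the tangle, the disks counted by $\partial$ pair up, and normality (disjoint or nested disks at switched crossings) gives the cancellation in the usual way.
\item For the handle generators, the differential is given by explicit algebraic formulas. There I would verify the equation directly, using that the Maslov potential conventions at $x=0$ and $x=A$ make the augmented generators of degree $0$.
\end{itemize}
This step is essentially Leverson's main theorem. I expect it to be the main obstacle, because of the bookkeeping of the infinite family of handle generators and of the signs.

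\textbf{Step 3 (surgery formula).} I would then apply the Bourgeois--Ekholm--Eliashberg surgery formula. It expresses $SH(X)$ as the homology of a complex built from the Hochschild-type (cyclic) version of $\mathcal{A}(L)$, and in particular $SH(X)=0$ if and only if the unit $1$ is exact in the relevant complex, equivalently $1\in\operatorname{im}\partial$ in $\mathcal{A}(L)$, since $SH$ is a unital ring.

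Given the augmentation from Step 1, suppose $1=\partial a$. Applying $\varepsilon$ gives
\[
1=\varepsilon(1)=\varepsilon(\partial a)=0,
\]
a contradiction. Hence $1$ is not exact, the Legendrian contact homology is nonzero, and the surgery formula yields $SH(X)\neq 0$. The grading hypothesis is what makes $\varepsilon$ respect the $\mathbb{Z}$-grading that the surgery isomorphism requires; the $\rho$-graded versions would only give the corresponding $\mathbb{Z}/\rho$-graded statement.
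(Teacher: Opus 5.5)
The paper gives no proof of this statement; it is quoted from Leverson. There it is obtained by combining her theorem that graded normal rulings produce graded augmentations of the Ekholm--Ng DGA with the surgery results of Bourgeois--Ekholm--Eliashberg. Your outline has the same architecture, and taking Steps 1--2 from Leverson's theorem is legitimate. One item of your recipe is off, though: in the resolved diagram right cusps are generators of degree $1$, so a graded augmentation must vanish on them. Note also that your final evaluation $1=\varepsilon(\partial a)=0$ uses only $\varepsilon(1)=1$ and $\varepsilon\circ\partial=0$, so the grading of $\varepsilon$ plays no role in it. What the hypothesis $\rho=0$ actually buys is that $0\mid 2r(L)$ forces every rotation number to vanish, hence $c_1(X)=0$ and everything is $\mathbb{Z}$-graded.

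The genuine gap is the key sentence of Step 3. The surgery isomorphism identifies $SH(X)$ with $LH^{Ho}(L)$, the homology of a complex spanned by \emph{non-empty} cyclic words of Reeb chords with one decorated letter. The unit $1\in\mathcal{A}(L)$ is not an element of this complex, and the unit of $SH(X)$ is not represented by it. So unitality of $SH(X)$ does not give the equivalence between $SH(X)=0$ and $1\in\operatorname{im}\partial$. The surgery formula formally yields only the vanishing direction, $1\in\operatorname{im}\partial\Rightarrow LH^{Ho}(L)=0\Rightarrow SH(X)=0$, which is the converse of what you need. Nothing in the formula alone converts non-vanishing of the homology of $\mathcal{A}(L)$ into non-vanishing of $LH^{Ho}(L)$. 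The missing ingredient is a unital map out of $SH(X)$ into something computed by $\mathcal{A}(L)$. Take the closed--open map $SH(X)\to HW(C)$ to the wrapped Floer cohomology of the union $C$ of the cocore discs of the $2$-handles; it is a unital ring homomorphism. Bourgeois--Ekholm--Eliashberg (see also Ekholm--Lekili) identify $HW(C)$ with the homology of $\mathcal{A}(L)$. Hence $SH(X)=0$ forces $HW(C)=0$, i.e.\ $1=\partial a$ for some $a$. Only then does your augmentation produce the contradiction $1=\varepsilon(\partial a)=0$.
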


\section{Contact Surgery Diagrams for $\mathbb{T}^3$}

Consider the $3$-Torus $\mathbb{T}^3=\mathbb{R}^3/(2\pi \mathbb{Z})^3$. The classification of tight contact structures on $\mathbb{T}^3$ due to Kanda \cite{Kanda} shows that $\mathbb{T}^3$ has $\mathbb{Z}$-many infinitely many non-contactomorphic tight contact structures. Let $\xi_n$ be the contact structure given by the kernel of the $1$-form 
\[\cos(n\theta)dx-\sin (n \theta)dy,
\] then every tight contact structure on $\mathbb{T}^3$ is contactomorphic to exactly one of the $\xi_n$, and all of the $\xi_n$s lie in the same homotopy class of tangential 2-plane fields. Consequently, homotopical invariants such as the Euler class and the $d_3$-invariant do not distinguish tight contact structures on $\mathbb{T}^3$.

We begin by proving the non-uniqueness of Legendrian surgery diagrams for the unique Stein-fillable contact structure $(\mathbb{T}^3,\xi_1)$ and that the total symplectic homology of any Stein fillings of $(\mathbb{T}^3,\xi_1)$ is non-zero.

\begin{proof}[Proof of Theorem \ref{stein}]
Consider the Legendrian knot $\title{L}$ in tight $\#^2(\mathbb{S}^1\times \mathbb{S}^2)$ as shown in the bottom left of Figure \ref{fig:kirby}. Note that the Thurston-Bennequin invariant of $\widetilde{L}$ is $1$. Therefore, contact $(-1)$-surgery along $\widetilde{L}$ gives a Stein fillings of $\mathbb{T}^3$. Thus the contact surgery diagram obtained from this Kirby diagram yields the tight contact structure $\xi_1$, as it is the only Stein-fillable contact structure on $\mathbb{T}^3$ (see Figure \ref{fig:kirby}, on the right).
\begin{figure}
    \centering
    \includegraphics[width=0.5\linewidth]{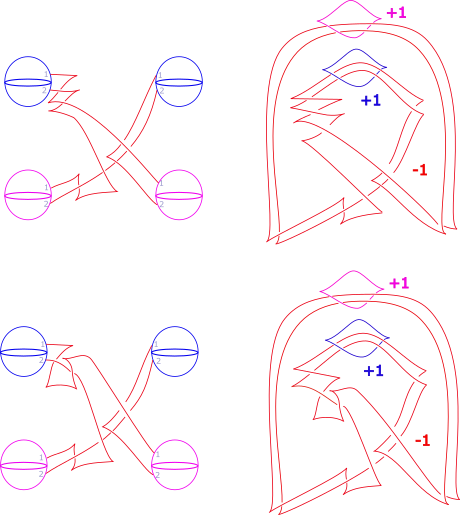}
    \caption{On the left we have two Legendrian knots $L$ (top) and $\widetilde{L}$ (bottom) in $\#^2(\mathbb{S}^1\times \mathbb{S}^2)$. On the right, we have a contact $(\pm1)$-surgery diagram for $\mathbb{T}^3$ obtained from the diagrams on the left.}
    \label{fig:kirby}
\end{figure}
\begin{figure}
    \centering
    \includegraphics[width=0.9\linewidth]{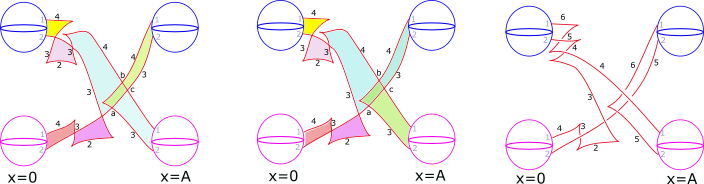}
    \caption{Two graded rulings for $\widetilde{L}$ (from the left).}
    \label{fig:ruling}
\end{figure}

Now we show that the total symplectic homology of this Stein filling is non-zero. To prove this, we show that $\widetilde{L}$ admits graded rulings. For this, first note that the rotation number of $\widetilde{L}$ is zero; thus it makes sense to compute graded rulings. In Figure \ref{fig:ruling}, the labels in black are the Maslov potential of each strand. We see that in $\widetilde{L}$ all crossings except crossing $c$ have grading $0$. And we get two graded rulings for $\widetilde{L}$: one graded ruling is obtained by making switches at all the crossings except at $b,c$, and the other graded ruling is obtained by making switches at all the crossings except at $a,c$. Using Theorem \ref{Graded}, we get that the Stein filling obtained by symplectic handle attachment along $\widetilde{L}$ has total symplectic homology non-zero.

Since we already know that there is a unique Stein filling of $(\mathbb{T}^3,\xi_1)$, any Stein filling of $(\mathbb{T}^3,\xi_1)$ has total symplectic homology non-zero.
    
\end{proof}

\begin{proof}[Proof of Corollary \ref{cor5}]
Consider the Legendrian knots $L$ in tight $\#^2(\mathbb{S}^1\times \mathbb{S}^2)$ as shown in top left of Figure \ref{fig:kirby}. Note that the rotation number of $L$ is $0$ and the Thurston-Bennequin invariant is $1$. Also, it can be seen easily by ignoring the cusp information that $L$ and $\widetilde{L}$ are smoothly isotopic. Since we already showed above that $\widetilde{L}$ admits a graded ruling, it is maximal $tb$ and hence $L$ is also maximal $tb$. Also note that since Stein fillings of $(\mathbb{T}^3,\xi_1)$ are unique up to deformation, the total symplectic homology of the Stein filling obtained from symplectic handle attachment along $L$ has total symplectic homology non-zero.

Now, we show that $L$ has no graded rulings. To see this, first we assign a Maslov potential in the $xz$-diagram of $L$ as shown in the rightmost picture in Figure \ref{fig:ruling}. We can see that the first crossing near the $1$-handle(blue) at $x=0$ does not have grading $0$. In order to have a normal ruling, it is mandatory to have a switch at this crossing. Thus, the normal rulings we get for $L$ can not be graded. 
    
\end{proof}

In the following lemma, we note the homotopical invariants of a tight contact structure on $\mathbb{T}^3$, which will be useful later.

\begin{lemma}\label{lem2}
    Let $(\mathbb{T}^3,\xi)$ be a tight contact manifold. Then $d_3(\xi)=\frac{1}{2}$ and $PD(e(\xi))=0$.
\end{lemma}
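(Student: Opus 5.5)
Since all tight contact structures $\xi_n$ on $\mathbb{T}^3$ lie in one homotopy class of plane fields (by Kanda's classification, as recalled above), the invariants $d_3$ and $PD(e)$ are the same for every tight $\xi$. It therefore suffices to compute them for a single one. I would take $\xi_1$ and use the contact $(\pm1)$-surgery diagram on the right of Figure~\ref{fig:kirby}, which comes from the Legendrian knot $\widetilde{L}$ in $\#^2(\mathbb{S}^1\times\mathbb{S}^2)$. In that diagram each $1$-handle is replaced by a standard Legendrian unknot ($tb=-1$, $rot=0$) with contact $(+1)$-surgery, and $\widetilde{L}$ becomes a Legendrian knot with contact $(-1)$-surgery. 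Equivalently, the underlying link is a Legendrian Borromean-type configuration: two $(+1)$ unknots together with a knot $K$ with $tb(K)=1$, $rot(K)=0$, carrying $(-1)$ surgery. All pairwise linking numbers are zero, since $\widetilde{L}$ passes algebraically zero times through each $1$-handle.

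Next I would apply Lemma~\ref{lem:d3} with $n_i=\pm1$. The topological coefficients are $-1+1=0$ for the two unknots and $1-1=0$ for $K$, so $Q$ is the $3\times 3$ zero matrix. This is consistent with $H_1=\mathbb{Z}^3$, and it gives $\sigma(Q)=0$ and $\operatorname{rk}(Q)=0$. All rotation numbers vanish: the unknots have $rot=0$, and $rot(\widetilde{L})=0$ as already noted in the proof of Theorem~\ref{stein}. Hence $PD(e(\xi_1))=\sum n_i r_i\mu_i=0$.

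For $d_3$, the equation $Qb=r=0$ has the solution $b=0$, so $d_3$ is well defined. With the second form of the formula, the two $(+1)$-surgeries each contribute $(3-1)\cdot 1=2$ and the $(-1)$-surgery contributes $(3+1)(-1)=-4$, so the sum is zero. That would give $d_3=0$, which disagrees with the claimed value $\frac12$, so I would instead rely on the first form of the formula, $\frac14(\langle r,b\rangle-3\sigma-2\operatorname{rk}-2)-\frac12+q$, with $q=2$. Reading it as $\frac14(\langle r,b\rangle-3\sigma(Q)-2)+q$ (the $\frac14(c_1^2-3\sigma-2\chi)+q$ convention, where $\chi$ absorbs the rank) gives $\frac14(0-0-2)+2=\frac32$. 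The literal bracketing gives $-\frac12-\frac12+2=1$. Neither is $\frac12$ yet, so the two forms of Lemma~\ref{lem:d3} are not mutually consistent as printed.

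I expect reconciling the $d_3$ normalization to be the main obstacle. My fallback, rather than fighting conventions, would be an independent check of $d_3(\xi_1)$. One route is the Stein filling itself: $X$ is $D^4\cup 2$ one-handles $\cup$ one $2$-handle, with $\chi(X)=1-2+1=0$, $\sigma(X)=0$, and $c_1(X)=0$ since $rot(\widetilde{L})=0$. Gompf's formula $d_3=\frac14(c_1^2-3\sigma-2\chi)+q$, with the paper's normalization shift of $+\frac12$, then yields $\frac12$. A second route is the known fact that $\xi_1$ is the canonical contact structure on the unit cotangent bundle $ST^*\mathbb{T}^2$. I would fix the additive normalization so that $(\mathbb{S}^3,\xi_{std})$ gets the paper's standard value, then verify that the surgery-diagram computation above agrees with the filling computation, which yields $d_3(\xi)=\frac12$ and $PD(e(\xi))=0$ for every tight $\xi$.
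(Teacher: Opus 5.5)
Your reduction to $\xi_1$ via Kanda, your use of the diagram in Figure~\ref{fig:kirby}, and your computation $Q=0$, $r=0$, hence $PD(e(\xi))=0$, are exactly the paper's argument. The error is in how you evaluate the second form of Lemma~\ref{lem:d3}. There $n_i\in\mathbb{N}$ is the denominator in the coefficient $\pm\frac{1}{n_i}$, and $\mathrm{sign}(n_i)$ records the sign of the surgery separately. This is how the paper uses it in Case~1 of the proof of Theorem~\ref{thm_cs3}, where $n_i'=-t_i-1>0$, $\mathrm{sign}(n_i')=-$, and the term is $-(3-n_i')$. So a contact $(-1)$-surgery has $n_i=1$ and contributes $(3-1)\cdot(-1)=-2$, not $(3+1)\cdot(-1)=-4$. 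The sum is $2+2-2=2$, and $d_3(\xi_1)=\frac14\cdot 2=\frac12$ immediately; that single line is the paper's whole proof. Your reading with $n_i=-1$ is not a formula in any fixed normalization. On the $tb=-1$ unknot it returns the value normalized by $d_3(\mathbb{S}^3,\xi_{std})=0$ for a $(+1)$-surgery, but the value normalized by $d_3(\mathbb{S}^3,\xi_{std})=-\frac12$ for a $(-1)$-surgery. So the ``normalization obstacle'' you anticipate is self-inflicted.

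Your first-form evaluations have a separate mistake. The term $-2\operatorname{rk}(Q)-2$ stands for $-2\chi(X)$, and $\chi(X)=1+k$ with $k=3$ the number of $2$-handles, regardless of $\operatorname{rk}(Q)$ (here $b_2(X)=3$ although $Q=0$). Taking $\chi=1$ is what gave you $\frac32$. With $\chi(X)=4$ you get $\frac14(0-0-8)+2=0$ in the $-\frac12$-normalization, i.e.\ $\frac12$ after the $+\frac12$ shift under which the second form returns $0$ on the empty diagram. You are right that the first form as printed, with $\operatorname{rk}(Q)$ and the constant $-\frac12$, does not match the second form; that is a misprint, not a real obstruction.

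Your Stein-filling fallback ($\chi=\sigma=c_1=0$, Gompf's $\frac14(c_1^2-3\sigma-2\chi)=0$, shifted to $\frac12$) is correct and is a genuinely different route. It needs only the filling and no surgery presentation in $\mathbb{S}^3$, whereas the paper's route gets $PD(e)$ and $d_3$ from the same diagram in one stroke. As written, however, you assert the $+\frac12$ shift without justifying it; it is precisely the normalization $d_3(\mathbb{S}^3,\xi_{std})=0$ built into the second form. Also, your closing ``agreement check'' with the surgery computation would fail on your numbers $0$, $1$, $\frac32$. It only succeeds once the $(-1)$-contribution is corrected to $-2$.
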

\begin{proof}
Due to the classification of tight contact structures on $\mathbb{T}^3$ \cite{Kanda}, we know that all tight contact structures on $\mathbb{T}^3$ are in the same homotopy class; therefore, it is enough to compute homotopy invariants only for one tight contact structure $\xi_1$, whose two contact $(\pm1)$-surgery diagrams are given in Figure \ref{fig:kirby}. The topmost diagram is already given in \cite{KegelEtnyreOnaran} and \cite{Ding_Geiges_Diffeotopy}.

One can see that for these contact surgery diagrams the linking matrix is zero, and the rotation numbers of link components are also zero. Therefore, $PD(e(\xi))=0$ and $d_3(\xi)=\dfrac{1}{4}\{(3-1)+(3-1)-(3-1)\}=\dfrac{1}{2}$.
\end{proof}

Now, we focus our attention on the contact structures on $\mathbb{T}^3$ that arrise my surgery along a link with $3$-components. Note that in order to get $\mathbb{T}^3$ by Dehn surgery, we need a link with atleast $3$-components since $H_1(\mathbb{T}^3;\mathbb{Z})=\mathbb{Z}\oplus\mathbb{Z}\oplus\mathbb{Z}$. One of the known examples is the Borromean ring, Dehn surgery along the Borromean ring with surgery coefficient $0$ produces $\mathbb{T}^3$, thus the contact surgery number of any contact structure on $\mathbb{T}^3$ is at least $3$. While contact surgery along the Borromean ring produces $\mathbb{T}^3$, the following lemma \ref{lem1} shows that if there is any other $3$-component link which produces $\mathbb{T}^3$ by Dehn surgery, then the link shares some common properties with the Borromean ring.
\begin{lemma}\label{lem1}
    If $\mathbb{T}^3$ is obtained as the surgered manifold $K^1(s_1)\sqcup K^2(s_2)\sqcup K^3(s_3)$ along a $3$-component link $K^1\sqcup K^2\sqcup K^3$, then $s_i=0, 
    \: \forall i$ and $lk(K^i,K^j)=0$ for all $i \neq j$.
\end{lemma}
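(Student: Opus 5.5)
The plan is to read the claim off from first homology. By Lemma~\ref{lem:d3}(1), or simply the standard surgery presentation, $H_1$ of the surgered manifold is presented by the $3\times 3$ linking matrix. Write the surgery coefficients as $s_i=p_i/q_i$ in lowest terms, with the convention $1/0=\infty$ for a component that is left unsurgered. Set $l_{ij}=lk(K^i,K^j)$. The matrix is
\[
Q=\begin{pmatrix} p_1 & q_2l_{12} & q_3l_{13}\\ q_1l_{12} & p_2 & q_3 l_{23}\\ q_1 l_{13} & q_2 l_{23} & p_3\end{pmatrix},
\]
and $H_1=\mathbb{Z}^3/\operatorname{im}(Q)$, with generators the three meridians $\mu_1,\mu_2,\mu_3$.

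Since $H_1(\mathbb{T}^3;\mathbb{Z})\cong\mathbb{Z}^3$, the cokernel of the integer $3\times 3$ matrix $Q$ must have rank $3$. A quotient of $\mathbb{Z}^3$ has rank $3$ only if the image of $Q$ has rank $0$. Because $Q$ is an integer matrix, a rank-zero image forces the image to be trivial, so $Q=0$.

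Reading off the entries then gives the conclusion. The diagonal gives $p_i=0$ for every $i$, so each $s_i=0$ with $q_i=\pm1$. In particular no component is $\infty$-surgered, since that would require $(p_i,q_i)=(1,0)$. Then the off-diagonal entries $q_j l_{ij}=\pm l_{ij}$ vanish, so $lk(K^i,K^j)=0$ for all $i\neq j$.

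The only step needing care is a point of bookkeeping. A rational coefficient gives the relation $p_i\mu_i+q_i\sum_{j\ne i} l_{ij}\mu_j=0$. Here it is the \emph{rows} that must be scaled by $q_i$, so the matrix is really the transpose-type variant of the one displayed in Lemma~\ref{lem:d3}. This does not matter for the argument, since either version must be the zero matrix. I would also remark that the surgered manifold could a priori arise with some $s_i=\infty$, i.e.\ as surgery on fewer components. This is excluded by the same rank count, since a link with at most $2$ surgered components yields $H_1$ of rank at most $2$. I expect no real obstacle beyond stating this convention.
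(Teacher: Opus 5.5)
Your proof is correct and follows essentially the same route as the paper: both present $H_1$ by the generalized linking matrix and use $H_1\cong\mathbb{Z}^3$ to force every relation, and hence every entry of $Q$, to vanish, which yields $p_i=0$, $q_i=\pm1$ and $l_{ij}=0$. Your rank/torsion-freeness justification and your row-versus-column remark are both sound. The paper's displayed relation $p_1\mu_1+q_2l_{12}\mu_2+q_3l_{13}\mu_3=0$ should indeed carry $q_1$'s, but this does not affect the conclusion.
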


\begin{proof}
    Let $K^1\sqcup K^2 \sqcup K^3$ be any link in $\mathbb{S}^3$ such that $K^1(s_1)\sqcup K^2(s_2)\sqcup K^3(s_3)=\mathbb{T}^3$, where $s_i=\frac{p_i}{q_i}, \: p_i,
    q_i\in \mathbb{Z}$ with $gcd(p_i,q_i)=1\: \forall \: i$ and whenever $p_i=0$ we take $ q_i=1$ and similarly whenever $q_i=0$ we take $ p_i=1$. Let $l_{ij}$ be linking number of $K^i$ and $K^j$. Then the linking matrix is 
    \[
    Q=\begin{bmatrix}
        p_1& q_2l_{12}&q_3l_{13}\\
        q_1l_{12}& p_2 &q_3l_{23}\\
        q_1l_{13}&q_2l_{23}& p_3
    \end{bmatrix},
    \]
    Let $\mu_i$ be the meridian of $K^i$, then $H_1(\mathbb{T}^3,\mathbb{Z})=\mathbb{Z}\oplus \mathbb{Z}\oplus \mathbb{Z}=\left< \mu_i| Q\mu=0\right>$, where $\mu=[\mu_1,\mu_2,\mu_3]$. As we have $Q\mu=0$, we get $p_1\mu_1+q_2l_{12}\mu_2+q_3l_{13}\mu_3=0$, thus $p_1, q_2l_{12}, q_3l_{13}=0$. Since $p_1=0$, we get $ q_1=1$. Similarly, by solving the rest of the equations obtained from $Q\mu=0$, we get  $p_2,p_3=0.$ Therfore we can assume $q_2=1=q_3$ and thus $l_{12},l_{13},l_{23}=0.$ 
\end{proof}

Next, we prove Theorem \ref{lem1}, which provides an obstruction on maximal $tb$ of link components that give $\mathbb{T}^3$ by Dehn surgery. In the proof, we mainly use the fact that contact ($-1$)-surgery along a Legendrian knot in a tight contact manifold $(M,\xi)$ produces a tight contact structure \cite{Wand}.

\begin{proof}[Proof of Theorem \ref{tb_obstruction}]
     Let $L= K^1\sqcup K^2\sqcup K^3$ be a link with with $\Bar{tb}(L)=\sum\Bar{tb}(K^i) $ and Dehn surgery along $L$ gives $\mathbb{T}^3$. Using Lemma \ref{lem1}, if $\mathbb{T}^3=K^1(s_1)\sqcup K^2(s_2)\sqcup K^3(s_3)$, then $lk(K^i,K^j)=0$ for all $i\neq j$ and $s_i=0$ for all $i$. 
     
     Now, on the contrary, assume that $\Bar{tb}(K^i)\geq 1$ for all $i$. Let $L^i$ be a Legendrian realisation of $K^i$ with $tb(L^i)\geq 1$ (Note that the condition $\Bar{tb}(L)=\sum\Bar{tb}(K^i)$ is used to gurantee existence of such an $L^i$). Let $t_i=tb(L^i)$. Then we have a contact surgery diagram 
    \begin{align*}
    L^1(-t_1)\sqcup L^2(-t_2)\sqcup L^3(-t_3)\cong L^1_{t_1-1}(-1)\sqcup L^2_{t_2-1}(-1)\sqcup L^3_{t_3-1}(-1).
\end{align*}

Then $(M,\xi'):=L^1_{t_1-1}(-1)$ is tight. Since linking of $L^2$ with $L^1$ is zero, $L^2$ remains nullhomologus in $(M,\xi')$ and $tb(L^2)$ in $(M,\xi')$ is the same as the $tb(L^2)$ in $(\mathbb{S}^3,\xi_{std})$ (see, section 4.1 and 4.2 for computing new $tb$ and homology class in the surgeres manifols, \cite{Kegel_thesis}). Therefore, contact $-1$ surgery on $L^2_{t_2-1}$ in $(M,\xi')$ is tight and it is same as $L^1_{t_1-1}(-1)\sqcup L^2_{t_2-1}(-1)$. Thus $(Y,\xi''):=L^1_{t_1-1}(-1)\sqcup L^2_{t_2-1}(-1)$ is tight. Similarly, $L^3$ remains null-homologous in $(Y,\xi'')$ as its linking number with both $L^1,L^2$ is zero, and also the Thurston--Bennequin of $L^3$ also remains the same in $(Y,\xi'')$. Now, we perform contact $(-1)$ surgery on $L^3_{t_3-1}$ in tight $(Y,\xi'')$ to get $L^1(-t_1)\sqcup L^2(-t_2)\sqcup L^3(-t_3)$. Thus, the surgered contact manifold is tight. Therefore $d_3(\xi)=\frac{1}{2}, PD(e(\xi))=0, PD(e(\xi))=0$ using Lemma \ref{lem2}. 

To arrive at a contradiction, we show that the $d_3$-invariant obtained from the contact surgery diagram is not equal to $\frac{1}{2}$. First, note that the linking matrix is zero. Thus the Euler class $0=PD(e(\xi))= r_1\mu_1+r_2\mu_2+r_3\mu_3=0$ implies $r_i=0$ for all $i$. Now using the formula for $d_3$, we get 
\begin{align*}
    d_3=\dfrac{1}{4}\{-(3-1)-(3-1)-(3-1)\}=\dfrac{-3}{2},
\end{align*}
which is a contradiction.
\end{proof}

\begin{remark}\label{rmk1}
    From above theorem, we can deduce that if $K^1\sqcup K^2 \sqcup K^3$ be a Legendrain link in $(\mathbb{S}^3,\xi_{std})$ with $tb(K^i)\geq 1\: \forall \: i$, then surgery along it can never produce $\mathbb{T}^3$.
\end{remark}

Next, we work with a general contact surgery diagram for $(\mathbb{T}^3,\xi)$ with $ 3$-components, and whenever $PD(e(\xi))=0$, we compute the $d_3$-invariant using Lemma \ref{lem:d3}. The $d_3$-invariant for a contact structure can be calculated using a contact surgery diagram only if its Euler class is torsion, that is, zero in the case of $\mathbb{T}^3$. Making use of this lemma, we compute the $d_3$-invariant of all contact structures on $\mathbb{T}^3$ whose contact surgery number is $3$.

\begin{proof}[Proof of Theorem \ref{thm_cs3}]
    Let $cs_{\mathbb{Z}}(\mathbb{T}^3,\xi)=3$. Then using Lemma \ref{lem1}, there exists a Legendrian link $K^1\sqcup K^2\sqcup K^3$ such that $(\mathbb{T}^3,\xi)=K^1(-t_1)\sqcup K^2(-t_2)\sqcup K^3(-t_3)$, where $t_i=tb(K_i)$ and $lk(K^i,K^j)=0$ for all $i\neq j$. If $t_i\leq -2$, then $K^i(-t_i)=K^i(+1)\times K^i_1\left(\frac{-1}{-t_i-1}\right)$, we always use $\mu_i, \mu_i'$ to denote the meridian of $K^i, K^i_1$, respectively and we use $r_i$ and $r_i'$ to denote the rotation numbers of $K^i$ and $K^i_1$, respectively. Note that in the definition of the contact surgery number, we consider contact surgery with a non-zero contact surgery coefficient (if one of the $t_i=0$, then the resultant contact structure is overtwisted). So we only consider cases with $t_i\neq 0 \: \forall i$. The contact surgery description depends on the values of the $ t_i$'s; therefore, we consider subcases depending on the values of the $ t_i$'s and compute the $d_3$-invariant whenever the Euler class is $0$. Also note that the case with $t_i\geq 1$ for all $i$ will not appear due to Remark \ref{rmk1}.


    \noindent \textbf{Case 1.} $t_i\leq -2, i=1,2,3$.

Using the translation Lemma, the contact surgery diagram can be written as 
\begin{align*}
    &K^1(-t_1)\sqcup K^2(-t_2)\sqcup K^3(-t_3)\cong\\
    &\left(K^1(+1)\times K^1_1\left(\frac{-1}{-t_1-1}]\right)\right)\sqcup \left(K^2(+1)\times K^2_1\left(\frac{-1}{-t_2-1}\right)\right)\sqcup \left(K^3(+1) \times K^3_1\left(\frac{-1}{-t^3-1}\right)\right),
\end{align*}
 and the linking matrix for the above contact $(\frac{\pm 1}{n})$-surgery diagram is 

 \[
    Q=\begin{bmatrix}
        t_1+1&-t_1(t_1+1)&0& 0&0&0\\
        t_1&-t_1^2&0&0&0&0\\
        0&0& t_2+1&-t_2(t_2+1)&0&0\\
        0&0&t_2&-t_2^2&0&0\\
        0&0&0&0&t_3+1&-t_3(t_3+1)\\
        0&0&0&0&t_3&-t_3^3
    \end{bmatrix}.
    \]
    The eignevalues of $Q$ are $0,0,0,-t_1^2+t_1+1,-t_2^2+t_2+1,-t_3^2+t_3+1$. Therefore, $\sigma(Q)=-3$. Let $\mu=[\mu_1,\mu_1'\mu_2,\mu_2',\mu_3,\mu_3']$, solving $Q\mu=0$, gives us $\mu_i=t_i\mu_i', i=1,2,3$. Also, note that $n_i=1, n_i'=-t_i-1$ and $sign(n_i)=+$ and $sign(n_i')=-$ and the rotation number $r_i'$ of $K_i^i$ takes value in $\{r_i+1,r_i-1\}$. for all $i$.
    
    Thus 

    \begin{align*}
        PD(e(\xi))&=\sum_{i=1}^3 r_in_i\mu_i+\sum_{i=1}^3 r_i'n'_i\mu_i'\\
        &=\sum_{i=1}^3 r_i\mu_i+\sum_{i=1}^3 r_i'(-t_i-1)\mu_i'\\
        &=\sum_{i=1}^3(r_it_i+r_i'(-t_i-1))\mu_i'.
    \end{align*}
     Note that
\begin{align*}
    PD(e(\xi))=0 &\iff r_it_i+r_i'(-t_i-1)=0 \: \forall i\\
    &\iff r_it_i+(r_i\pm 1)(-t_i-1)=0 \: \forall i\\
   &\iff  r_i =\pm(-t_i-1) \: \forall i.
\end{align*}

Now we calculate the $d_3$-invariant using Lemma \ref{lem:d3} when the Euler class is zero. Let's assume that the Euler class is $0$, which is equivalent to $r_i =\pm(-t_i-1)$. One can check that the rotation vector $r=[r_1,r_1',r_2,r_2',r_3,r_3'],$ satisfies $ Qb=r,$ where $ b=[b_1,0,b_2,0,b_3,0]$ with $b_i=\mp 1$ if $r_i'=r_i\pm 1$. Therefore we have, $r_in_ib_i=t_i+1$ and $b_i'=0$ for all $i$. Thus 

\begin{align*}
d_3&=\frac{1}{4}\left\{\sum_{i=1}^3r_ib_in_i+\sum_{i=1}^3r_i'b_i'n_i'+\sum_{i=1}^3 sign (n_i) (3-n_i)+\sum_{i=1}^3 sign(n_i')(3-n_i')\right\}-\frac{3}{4}(\sigma(Q))\\
    &=\frac{1}{4}\left\{(t_1+1)+(t_2+1)+(t_3+1)+3(3-1)-(4+t_1)-(4+t_2)-(4+t_3)\right\}-\frac{3}{4}(-3)\\
    &=\frac{1}{4}\left\{3+6-12\right\}+\frac{9}{4}=\frac{3}{2}.
\end{align*}
Thus, all the contact structures in this case are overtwisted, and the Euler class $0$ implies that $d_3=\frac{3}{2}$.

We proceed with the other cases similarly.

\noindent \textbf{Case 2.} $t_1=-1, t_2, t_3\leq -2 $.
The contact surgery diagram is given by 
\begin{align*}
    &K^1(+1)\sqcup K^2(-t_2)\sqcup K^3(-t_3)\cong\\
    &K^1(+1)\sqcup \left(K^2(+1)\times K^2_1\left(\frac{-1}{-t_2-1}\right)\right)\sqcup \left(K^3(+1) \times K^3_1\left(\frac{-1}{-t_3-1}\right)\right),
\end{align*}
 and the linking matrix is given by 

 \[
    Q=\begin{bmatrix}
        0& 0&0&0&0\\
        0& t_2+1&-t_2(t_2+1)&0&0\\
        0&t_2&-t_2^2&0&0\\
        0&0&0&t_3+1&-t_3(t_3+1)\\
        0&0&0&t_3&-t_3^3
    \end{bmatrix}.
    \]
The eignevalues of $Q$ are $0,0,0,-t_2^2+t_2+1,-t_3^2+t_3+1$, thus $\sigma(Q)=-2$. Again $PD(e(\xi))= 0$ iff $r_1=0, r_i =\pm(-t_i-1) \: i= 2,3$.

Now consider $r_1=0, r_i =\pm(-t_i-1), i=2,3$. Similar to above, one can check that the rotation vector $r=[0,r_2,r_2',r_3,r_3'],$ satisfies $ Qb=r,$ where $ b=[0,b_2,0,b_3,0]$ with $b_i=\mp 1$ if $r_i'=r_i\pm 1, i=2,3$. Therefore we have, $r_in_ib_i=t_i+1, i=2,3$ and $r_1=b_i'=0$ for all $i$. Thus 

\begin{align*}
d_3&=\frac{1}{4}\left\{(t_2+1)+(t_3+1)+3(3-1)-(4+t_2)-(4+t_3)\right\}-\frac{3}{4}(-2)\\
    &=\frac{1}{4}\left\{2+6-8\right\}+\frac{6}{4}=\frac{3}{2}.
\end{align*}

\noindent \textbf{Case 3.} $t_1,t_2=-1, t_3\leq -2$.

Using the translation lemma,

$K^1(+1)\sqcup K^2(+1)\sqcup K^3(-t_3)\cong K^1(+1)\sqcup K^2(+1)\sqcup \left(K^3(+1) \times K^3_1\left(\frac{-1}{-t_3-1}\right)\right) $. 

\[
    Q=\begin{bmatrix}
        0& 0&0&0\\
        0& 0&0&0\\
        0&0&t_3+1&-t_3(t_3+1)\\
        0&0&t_3&-t_3^3
    \end{bmatrix}.
    \]
Note that eigenvalues of $Q$ are $0,0,0,-t_3^2+t_3+1$, therfore $\sigma(Q)=-1$. We have

\begin{align*}
    PD(e(\xi))=r_3\mu_3+(r_3\pm 1)(-t_3-1)\mu_3'=(-r_3\pm (-t_3-1))\mu_3'.
\end{align*}

$PD(e(\xi))=0$ iff $r_3=\pm (-t_3-1)$. We can calculate the $d_3$-invariant. First note that $b=[0,0,\mp 1,0]$ is a solution of $Qb=r=[0,0,r_3,r_3\pm 1]$ and thus 
\begin{align*}
    d_3&=\frac{1}{4}\left\{ (t_3+1)+3(3-1)-(4+t_3)\right\}-\frac{3\sigma(Q)}{4}=\frac{3}{2}.
\end{align*}

\noindent\textbf{Case 4.} $t_1,t_2, t_3=-1$.

The contact surgery diagram is given as $K^1(+1)\sqcup K^2(+1)\sqcup K^3(+1) $ and the linking matrix $Q=0$. The Euler class $PD(e(\xi))=\sum r_i\mu_i=0$ iff $r_i=0 \forall i$. And, whenever the Euler class is $0$, one can show that $d_3=\frac{3(3-1)}{4}=\frac{3}{2}$.
    
\noindent \textbf{Case 5.} $t_1\geq 1, t_2,t_3\leq -2$.

Again, by the use of the translation Lemma, we have
    \begin{align*}
    &K^1(-t_1)\sqcup K^2(-t_2)\sqcup K^3(-t_3)\cong\\
    & K^1_{t_1-1}(-1)\sqcup \left(K^2(+1)\times K^2_1\left(\frac{-1}{-t_2-1}\right)\right)\sqcup \left(K^3(+1) \times K^3_1\left(\frac{-1}{-t_3-1}\right)\right),
\end{align*}
 and the linking matrix is the same as given in Case 2 above, with $\sigma(Q)=-2$. Solving $Q\mu=0$, gives us $\mu_i=t_i\mu_i', i=2,3$. Let $r_1'$ be the rotation number of $K^1_{t_1-1}$. Now 
    
\begin{align*}
    PD(e(\xi))=r_1'\mu_1+\sum_{i=2,3}(r_it_i+r_i'(-t_i-1))\mu_i'=0 &\iff r_1', r_it_i+r_i'(-t_i-1)=0, \: i=1,2 \\
   &\iff r_1'=0,\: r_i =\pm(-t_i-1), \:  i=2,3.
\end{align*}
     Now consider $r_1'=0,\:r_i =\pm(-t_i-1)$. As described before, $b=[0,b_2, 0, b_3, 0]$ is a solution for $Qb=r=[0, r_2, r_2', r_3, r_3']$, where $b_i=\mp 1$ if $r_i'=r_i\pm 1, i=2,3$. Thus 

\begin{align*}
    d_3&=\frac{1}{4}\left\{(t_2+1)+(t_3+1)+2(3-1)-(3-1)-(4+t_2)-(4+t_3)\right\}-\frac{3}{4}(-2)\\
    &=\frac{1}{4}\left\{2+2-8\right\}+\frac{6}{4}=\frac{1}{2}.
\end{align*}

\noindent \textbf{Case 6.} $t_1\geq 1, t_2=-1,t_3\leq -2$.
    \begin{align*}
    &K^1(-t_1)\sqcup K^2(-t_2)\sqcup K^3(-t_3)\cong\\
    & K^1_{t_1-1}(-1)\sqcup K^2(+1)\sqcup \left(K^3(+1) \times K^3_1\left(\frac{-1}{-t^3-1}\right)\right),
\end{align*}
 and the linking matrix is the same as in Case 3 with $\sigma(Q)=-1$. Solving $Q\mu=0$, gives us $\mu_3=t_3\mu_3'$.
    \begin{align*}
        PD(e(\xi))= r_1'\mu_1+r_2\mu_2+r_3\mu_3+ r_3'(-t_3-1)\mu_3'\\
        =r_1'\mu_1+r_2\mu_2+(r_3t_3+r_3'(-t_3-1))\mu_3'.
    \end{align*}
     Now $PD(e(\xi))=0$ iff $r_1',r_2=0, r_3 =\pm(-t_3-1)$, where $r_1'$ is the rotation number of $K^1_{t_1-1}$. Then $b=[0, 0, \mp1, 0]$ is a solution for $Qb=r=[0,0, r_3, r_3\pm 1]$. Thus 

\begin{align*}
    d_3&=\frac{1}{4}\left\{(t_3+1)+2(3-1)-(3-1)-(4+t_3)\right\}-\frac{3}{4}(-1)=\frac{1}{2}.
\end{align*}

\noindent \textbf{Case 7.} $t_1\geq 1, t_2,t_3=-1$.
    \begin{align*}
    K^1(-t_1)\sqcup K^2(-t_2)\sqcup K^3(-t_3)\cong K^1_{t_1-1}(-1)\sqcup K^2(+1)\sqcup K^3(+1),
\end{align*}
 and the linking matrix is given by $Q=0$ and $\sigma(Q)=0$.
    \begin{align*}
        PD(e(\xi))= r_1'\mu_1+r_2\mu_2+r_3\mu_3.
    \end{align*}
     Now $PD(e(\xi))=0$ iff $r_1',r_2,r_3=0$ and in this case, 

\begin{align*}
    d_3&=\frac{1}{4}\left\{2(3-1)-(3-1)\right\}=\frac{1}{2}.
\end{align*}

\noindent \textbf{Case 8.} $t_1, t_2\geq 1, t_3\leq -2$.

In this case the contact surgery diagram is
\begin{align*}
    &K^1(-t_1)\sqcup K^2(-t_2)\sqcup K^3(-t_3)\cong K^1_{t_1-1}(-1)\sqcup K^2_{t_2-1}(-1)\sqcup \left(K^3(+1) \times K^3_1\left(\frac{-1}{-t_3-1}\right)\right),
\end{align*}
 and the linking matrix same as the one in Case 3 with $\sigma(Q)=-1$.

 Solving $Q\mu=0$, gives us $\mu_3=t_3\mu_3'$. Let $r_1'$ and $r_2'$ be the rotation numbers of $K^1_{t_1-1}$ and $K^2_{t_2-1}$, respectively. Then
    \begin{align*}
        PD(e(\xi))= r_1'\mu_1+r_2'\mu_2+r_3\mu_3+ r_3'(-t_3-1)\mu_3'\\
        =r_1'\mu_1+r_2'\mu_2+(r_3t_3+r_3'(-t_3-1))\mu_3'.
    \end{align*}
     Now $PD(e(\xi))=0$ iff $r_1',r_2'=0, r_3 =\pm(-t_3-1)$. Then $b=[0, 0, \mp1, 0]$ is a solution for $Qb=r=[0,0, r_3, r_3\pm 1]$. Thus 

\begin{align*}
    d_3&=\frac{1}{4}\left\{(t_3+1)-2(3-1)+(3-1)-(4+t_3)\right\}-\frac{3}{4}(-1)=\frac{-1}{2}.
\end{align*}

\noindent \textbf{Case 9.} $t_1, t_2\geq 1, t_3= -1$.
\begin{align*}
    &K^1(-t_1)\sqcup K^2(-t_2)\sqcup K^3(-t_3)\cong K^1_{t_1-1}(-1)\sqcup K^2_{t_2-1}(-1)\sqcup K^3(+1),
\end{align*}
 and the linking matrix $Q=0$. The Euler class
    \begin{align*}
        PD(e(\xi))= r_1'\mu_1+r_2'\mu_2+r_3'\mu_3
    \end{align*}
    iff $r_1',r_2',r_3=0$, where $r_i'$ is rotation number of $K^i_{t_i-1}\: \forall \:i$. Then 

\begin{align*}
    d_3&=\frac{1}{4}\left\{-2(3-1)+(3-1)\right\}=\frac{-1}{2}.
\end{align*}
       
Thus in all the cases, either the Euler class is non-zero or, when it is zero, the $d_3$-invariant takes value in the set $\{\pm \frac{1}{2}, \frac{ 3}{2}\}$.
    
\end{proof}

\begin{proof}[Proof of Corollary \ref{cor1}]
    Let $(\mathbb{T}^3,\xi)$ be tight and obtained by contact surgery along a Legendrian link $K^1\sqcup K^2\sqcup K^3$. Then we know that $d_3(\xi)=\frac{1}{2}$ and $\PD(e(\xi))=0$. From Theorem \ref{thm_cs3}, we see that Euler class $0$ and $d_3=\frac{1}{2}$ only appears when $tb(K^1)\geq 1$ and $tb(K^2),tb(K^3)\leq -1$. Also, from the proof of Theorem \ref{thm_cs3}, note that $PD(e(\xi))=0\implies$ that $rot(K^i)=\pm (tb(K^i)-1)$ for $i=2,3$.

\end{proof}
\begin{proof}[Proof of Corollary \ref{cor4}]
   From the proof of Corollary \ref{cor1}, we can assume that $tb(K^1)\geq 1$ and $tb(K^2),tb(K^3)\leq -1$. Since we have contact $(\pm1)$- surgery coefficients, we get $tb(K^1)= + 1$ and $tb(K^2),tb(K^3)=-1$. The contact surgery diagram is given as 
   $K^1(-1)\sqcup K^2(+1)\sqcup K^3(+1)$.

Suppose, for contradiction, that $L$ is obtained from another Legendrian link $\widetilde{L}$ by stabilizing the $K^2$- and $K^3$-components. Using Proposition~3.2 of \cite{Lisca_Stipsicz_Contact_Szabo_invariants}, it follows that if a stabilized component in a contact $(\pm1)$-surgery diagram has surgery coefficient $+1$, then the resulting contact structure is overtwisted. Since both $K^2$ and $K^3$ have surgery coefficient $+1$, the contact structure obtained by surgery on $L$ would be overtwisted, contradicting the assumption that $\xi$ is tight. Therefore, $L$ cannot be obtained from another Legendrian link by performing stabilization on the components corresponding to $K^2$ and $K^3$.
\end{proof}

The proof of the next corollary concerning $\pm1$-contact surgery description of a tight contact structure on $\mathbb{T}^3$ follows from the fact that often $(+1)$-contact surgeries on stabilized knots result in overtwisted contact structures.

\begin{proof}[Proof od Corollary \ref{cor2}.]
    Let $cs_{\pm1}(\mathbb{T}^3,\xi)=3$. Then $(\mathbb{T}^3,\xi)=K^1(-t_1)\sqcup K^2(-t_2)\sqcup K^3(-t_3)$, where $K^i$ is a Legendrian knot with $tb(K^i)=t_i \in \{1,-1\}$. From the proof of the Theorem \ref{thm_cs3}, we can see that such case occurs in Case 4, 7, and 9 and in these cases whenever $PD(e(\xi))=0$, the invariant $d_3(\xi)=\frac{q^+-q^-}{2}\in \left\{\frac{\pm 1}{2},\frac{ 3}{2}\right\}$, where $q^{\pm}$ is the number of contact $(\pm1)$-surgeries. Also, we know that if $\xi$ is overtwisted then it is determined by its homotopy class, therefore there are atmost $3$ overtwisted contact structures $\xi$ on $\mathbb{T}^3$ with $d_3(\xi)=\frac{ 3}{2},\frac{\pm1}{2}$ and $cs_{\pm 1}(\mathbb{T}^3,\xi)=3$.
\end{proof}

Next, we investigate the contact structures obtained by surgery along Borromean rings. We show that all of them are overtwisted, we compute their Euler classes and summarize them in Table \ref{listd_3}.

\begin{proof}[Proof of Theorem \ref{lem3}]
    The proof is very similar to Case 1 of Theorem \ref{thm_cs3}, where we consider all Legendrian knots with $tb\leq -1$. Let $U^1\sqcup U^2\sqcup U^3$ be any Legendrian Borromean ring and let $(\mathbb{T}^3,\xi)$ be obtained by contact surgery along this Legendrian link. Let $t_i,r_i$ denote $tb$ and rotation number of $U^i$, respectively. Note that the maximal $tb$ of a Legendrian Borromean ring is $-4$, therefore, at least one of the knots is stabilized, \cite{Mohnke}. We fix a maximal $tb$ Legendrian Borromean ring as shown in Figure \ref{fig:max_tb_Bor}, and all Legendrian representatives of Borromean rings are obtained by stabilising this link. Note that if $t_i\leq -2$, then $U^i(-t_i)=U^i(+1)\times U^i_1\left(\frac{-1}{-t_3-1}\right)$. We use $\mu_i, \mu_i'$ to denote the meridian of $U^i, U^i_1$, respectively. Without loss of generality, assume that $t_1,t_2\leq -1, t_3\leq -2$. For $i=1,2$, whenever $t_i\leq -2$, we assume that $U^i$ is obtained from the Legendrian unknot of $tb=-1$ by doing $p_i$ positive stabilizations and $s_i$ negative stabilizations. In the case of $U^3$, we assume that it is obtained from the Legendrian unknot of $tb=-2, rot=-1$, by doing $p_3$ positive stabilizations and $s_3$ negative stabilizations (see Figure \ref{fig:max_tb_Bor}). Then $r_i=p_i-s_i, t_i=-1-p_i-s_i$ for $i=1,2$ and $r_3=-1-s_3+p_3, t_3=-2-p_3-s_3$. 

    First, we prove (1) and (2). Case 1 of Thereom \ref{thm_cs3}, we know that either $PD(e(\xi))$ non-zero or when it is zero the $d_3$-invariant takes value $\frac{3}{2}$. Therefore, in both cases $\xi$ is overtwisted.

    Now we prove (3). We make cases to study all the possibilities for $PD(e(\xi))$ depending on $t_i$.

   \begin{figure}
\centering
\includegraphics[width=0.4\linewidth]{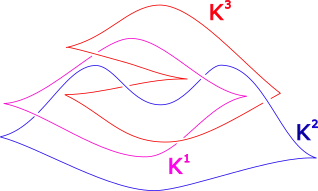}
\hspace{30pt}
\includegraphics[width=0.4\linewidth]{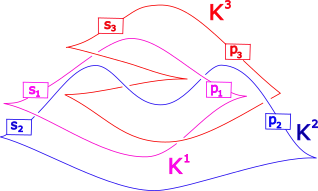}
\caption{The diagram on the left is for a maximal $tb$ Legendrian Borromean ring. The diagram on the right shows that all possible realisations of Legendrian Borromean rings are obtained by stabilizing the link components.}
\label{fig:max_tb_Bor}
\end{figure}

\noindent \textbf{Case 1}. $t_1, t_2, t_3\leq -2 $. 

This is similar to Case 1 in Theorem \ref{thm_cs3}. The contact surgery diagram is given by
\begin{align*}
&U^1(-t_1)\sqcup U^2(-t_2)\sqcup U^3(-t_3)\cong\\
&\left(U^1(+1)\times U^1_1\left(\frac{-1}{-t_1-1}\right)\right)\sqcup \left(U^2(+1)\times U^2_1\left(\frac{-1}{-t_2-1}\right)\right)\sqcup \left(U^3(+1) \times U^3_1\left(\frac{-1}{-t^3-1}\right)\right),
\end{align*}
and the linking matrix is the same as in Case 1 of Theorem \ref{thm_cs3}, with relations $\mu_i=t_i\mu'_i, i=1,2,3$.

Note that contact structures on the surgered manifold depend on the choice of stabilization in the pushoff $U^i_1$ of $U^i, i=1,2,3$, so we introduce some notation to distinguish between these cases. Let $\xi_{(1^{1_*},2^{2_*},3^{3_*})}$ denote the contact structure obtained by making a choice of $i_*\in \{+,-\}$ stabilization on the pushoff of $U^i$. Let $r_i'\in \{r_i+1,r_i-1\}$ be the rotation number of the pushoff $U^i_1$. Thus 
\begin{align*}
    PD(e(\xi))&=r_1\mu_1+(r_1'(-t_1-1))\mu_1'+r_2\mu_2+(r_2'(-t_2-1))\mu_2'+r_3\mu_3+r_3'(-t_3-1)\mu_3'\\
    &=(-r_1\pm (-t_1-1))\mu_1'+(-r_2\pm (-t_2-1))\mu_2'+(-r_3\pm (-t_3-1))\mu_3'.
\end{align*}

We calculate the Euler classes for each choice of the stabilization in $U^i_1$ as follows:
\begin{align*}
PD(e(\xi_{(1^+,2^+,3^+)}))&=(-r_1-t_1-1)\mu_1’+(-r_2-t_2-1)\mu_2’+(-r_3-t_3-1)\mu_3’\\
&=2s_1\mu_1’+2s_2\mu_2’+(2+2s_3)\mu_3’\\
PD(e(\xi_{(1^+,2^+,3^-)}))&=(-r_1-t_1-1)\mu_1’+(-r_2-t_2-1)\mu_2’+(-r_3+t_3+1)\mu_3’\\
&=2s_1\mu_1’+2s_2\mu_2’-2p_3\mu_3’\\
PD(e(\xi_{(1^+,2^-,3^+)}))&=(-r_1-t_1-1)\mu_1’+(-r_2+t_2+1)\mu_2’+(-r_3-t_3-1)\mu_3’\\
&=2s_1\mu_1’-2p_2\mu_2’+(2+2s_3)\mu_3’\\
PD(e(\xi_{(1^-,2^+,3^+)}))&=(-r_1+t_1+1)\mu_1’+(-r_2-t_2-1)\mu_2’+(-r_3-t_3-1)\mu_3’\\
&=-2p_1\mu_1’+2s_2\mu_2’+(2+2s_3)\mu_3’
\end{align*}
Similarly, we calculate the rest of the Euler classes:
\begin{align*}
PD(e(\xi_{(1^+,2^-,3^-)}))&=2s_1\mu_1’-2p_2\mu_2’-2p_3\mu_3’\\
PD(e(\xi_{(1^-,2^-,3^+)}))&=-2p_1\mu_1’-2p_2\mu_2’+(2+2s_3)\mu_3’\\
PD(e(\xi_{(1^-,2^+,3^-)}))&=-2p_1\mu_1’+2s_2\mu_2’-2p_3\mu_3’\\
PD(e(\xi_{(1^-,2^-,3^-)}))&=-2p_1\mu_1’-2p_2\mu_2’-2p_3\mu_3’
\end{align*}

We see that these Euler classes are non-vanishing for infinitely many values of $s_i,p_i$. Therefore, all the contact structures corresponding to these are overtwisted. We can also see that there are infinitely many different Euler classes; hence there are infinitely many non-contactomorphic overtwisted contact structures coming from this case. Now we consider the next case.

\noindent \textbf{Case 2}. $t_1=-1, t_2, t_3\leq -2 $.

The contact surgery diagram is given by
\begin{align*}
U^1(+1)\sqcup \left(U^2(+1)\times U^2_1\left(\frac{-1}{-t_2-1}\right)\right)\sqcup \left(U^3(+1) \times U^3_1\left(\frac{-1}{-t_3-1}\right)\right),
\end{align*}
and the linking matrix is already given in Theorem \ref{thm_cs3} with $\mu_i=t_i\mu_i’, i=2,3$.

Let $\xi_{(1,2^{\pm},3^{\pm})}$ denote the contact structure obtaine by making a choice of $(\pm)$ stabilization on $U^2_1$ and $U^3_1$. Note that in the case of the unknot, $r_1=0$. Then
\begin{align*}
PD(e(\xi_{(1,2^+,3^+)}))&=r_2\mu_2+(r_2+ 1)(-t_2-1)\mu_2’+r_3\mu_3+(r_3+ 1)(-t_3-1)\mu_3’\\
&=(-r_2-t_2-1)\mu_2’+(-r_3-t_3-1)\mu_3’\\
&=2s_2\mu_2’+(2+2s_3)\mu_3’\\
PD(e(\xi_{(1,2^+,3^-)}))&=r_2\mu_2+(r_2+ 1)(-t_2-1)\mu_2’+r_3\mu_3+(r_3- 1)(-t_3-1)\mu_3’\\
&=2s_2\mu_2’±2p_3\mu_3’\\
\end{align*}

After similar computations, we get
\begin{align*}
PD(e(\xi_{(2^-,3^-)}))&=-2p_2\mu_2’-2p_3\mu_3’\\
PD(e(\xi_{(2^-,3^+)}))&=-2p_2\mu_2’+(2+2s_3)\mu_3’
\end{align*}

\noindent \textbf{Case 3}. $t_2=-1, t_1, t_3\leq -2 $.

This case is similar to Case 2 above. The surgery description is as follows:
\begin{align*}
\left(U^1(+1)\times U^1_1\left(\frac{-1}{-t_1-1}\right)\right)\sqcup U_2(+1)\sqcup \left(U^3(+1)\sqcup \times U^3_1\left(\frac{-1}{-t^3-1}\right)\right).
\end{align*}

Let $\xi_{(1^{\pm},2,3^{\pm})}$ denote the contact structure obtaine by making a choice of $(\pm)$ stabilization on $U^1_1$ and $U^3_1$. We get
\begin{align*}
PD(e(\xi_{(1^+,2,3^+)}))&=2s_1\mu_1’+(2+2s_3)\mu_3’\\
PD(e(\xi_{(1^+,2,3^-)}))&=2s_1\mu_1’-2p_3\mu_3’\\
PD(e(\xi_{(1^-,2,3^-)}))&=-2p_1\mu_1’-2p_3\mu_3’\\
PD(e(\xi_{(1^-,2,3^+)}))&=-2p_1\mu_1’+(2+2s_3)\mu_3’
\end{align*}

\noindent \textbf{Case 4.} $t_1,t_2=-1, t_3\leq -2$.

Using the translation lemma, the contact surgery diagram is as follows:

$U^1(+1)\sqcup U^2(+1)\sqcup \left(U^3(+1) \times U^3_1\left(\frac{-1}{-t^3-1}\right)\right).$

Note that, from this contact surgery diagram, we obtained at most two contact structures depending on the sign of stabilization used in $U^3_1$. If the stabilization is positive, then we denote it by $\xi_{1,2,3^+}$ and otherwise we denote it by $\xi_{1,2,3^-}$.

Thus

\begin{align*}
PD(e(\xi))&=r_3\mu_3+(r_3\pm 1)(-t_3-1)\mu_3’\\
&=(-r_3\pm (-t_3-1))\mu_3’.
\end{align*}

Thus $PD(e(\xi_{1,2,3^+}))=2(1+s_3)\mu_3’ \neq 0$ and $PD(e(\xi_{1,2,3^-}))= -2p_3\mu_3’$.

We summarize all the Euler classes in Table \ref{listd_3}. We see that by varying the number of positive and negative stabilizations, $s_i,p_i\geq 0$, on the unknot components, we can get any Euler class of the form $2(i,j,k)$ for all $i,j,k\in \mathbb{Z}$.

\begin{table}[h]\label{listd_3}
\centering
\begin{tabular}{|c|c|c|}
\hline
\multicolumn{3}{|c|}{
$\begin{aligned}
t_i &= tb(U_i),\quad r_i = rot(U_i)\\
t_i &= -1-p_i-s_i,\quad r_i = p_i-s_i,\quad i=1,2\\
t_3 &= -2-p_3-s_3,\quad r_3 = -1+p_3-s_3, p_i, s_i\geq 0
\end{aligned}$
} \\ \hline
Possible values of $t_i$s & $\xi$ & Euler class $(\mu_1’,\mu_2’,\mu_3’)$ \\ \hline

$t_1,t_2,t_3\leq -2$ & $\xi_{(1^+,2^+,3^+)}$& $2(s_1,s_2,s_3+1)$ \\ \hline
$t_1,t_2,t_3\leq -2$&  $\xi_{(1^+,2^+,3^-)}$ & $2(s_1,s_2,-p_3)$ \\ \hline
$t_1,t_2,t_3\leq -2$&  $\xi_{(1^+,2^-,3^+)}$ & $2(s_1,-p_2,1+s_3)$ \\ \hline
$t_1,t_2,t_3\leq -2$& $\xi_{(1^-,2^+,3^+)}$ & $2(-p_1,s_2,s_3+1)$ \\ \hline
$t_1,t_2,t_3\leq -2$& $\xi_{(1^+,2^-,3^-)}$ & $2(s_1,-p_2,-p_3)$ \\ \hline
$t_1,t_2,t_3\leq -2$&$\xi_{(1^-,2^-,3^+)}$ & $2(-p_1,-p_2,s_3+1)$ \\ \hline
$t_1,t_2,t_3\leq -2$& $\xi_{(1^-,2^+,3^-)}$ & $2(-p_1,s_2,-p_3)$ \\ \hline
$t_1,t_2,t_3\leq -2$&$\xi_{(1^-,2^-,3^-)}$ & $2(-p_1,-p_2,-p_3)$ \\ \hline

$t_1=-1,t_2,t_3\leq -2$& $\xi_{(1,2^+,3^+)}$ & $2(0,s_2,s_3+1)$ \\ \hline
$t_1=-1,t_2,t_3\leq -2$& $\xi_{(1,2^+,3^-)}$ & $2(0,s_2,-p_3)$ \\ \hline
$t_1=-1,t_2,t_3\leq -2$& $\xi_{(1,2^-,3^+)}$ & $2(0,-p_2,s_3+1)$ \\ \hline
$t_1=-1,t_2,t_3\leq -2$& $\xi_{(1,2^-,3^-)}$ & $2(0,-p_2,-p_3)$ \\ \hline

$t_2=-1,t_1,t_3\leq -2$& $\xi_{(1^+,2,3^+)}$ & $2(s_1,0,s_3+1)$ \\ \hline
$t_2=-1,t_1,t_3\leq -2$& $\xi_{(1^+,2,3^-)}$ & $2(s_1,0,-p_3)$ \\ \hline
$t_2=-1,t_1,t_3\leq -2$& $\xi_{(1^-,2,3^+)}$ & $2(-p_1,0,s_3+1)$ \\ \hline
$t_2=-1,t_1,t_3\leq -2$& $\xi_{(1^-,2,3^-)}$ & $2(-p_1,0,-p_3)$ \\ \hline

$t_1=t_2=-1,t_3\leq -2$&$\xi_{(1,2,3^+)}$ & $2(0,0,s_3+1)$ \\ \hline
$t_1=t_2=-1,t_3\leq -2$& $\xi_{(1,2,3^-)}$ & $2(0,0,-p_3)$ \\ \hline

\end{tabular}
\caption{Euler classes obtained by contact surgery on Legendrian Borromean rings $U_1\sqcup U_2\sqcup U_3$.}
\end{table}

\end{proof}

\begin{proof}[Proof of Corollary \ref{cor3}]
    Consider the unique overtwisted contact structure $\xi_1'$ on $\mathbb{T}^3$ with $cs_{\mathbb{Z}}(\mathbb{T}^3,\xi_1')=3, \: PD(e(\xi_1'))=0$ and $d_3(\xi_1')=\frac{3}{2}$ as obtained in Theorem \ref{lem3}. Let $\xi_1$ be the unique overtwisted contact structure $\mathbb{S}^3$ which is obtained by $+1$ surgery along a Legendrian unknot of $tb=-2$ (\cite{KegelEtnyreOnaran}). Let $\xi_2'$ be the contact structure on $\mathbb{T}^3$ obtained as $(\mathbb{T}^3,\xi_1')\#(\mathbb{S}^3,\xi_1)$. We know $d_3(\xi_1)=1$, therefore $d_3(\xi_2')=\frac{5}{2}$ and also $cs_{\mathbb{Z}}(\mathbb{T}^3, \xi_2')\leq 4$. Also note that, $PD(e(\xi_2'))=0$ as the connected sum with $(\mathbb{S}^3,\xi_1)$ does not change the $\Gamma$-invariant and hence the Euler class in unchanges ($2\Gamma=e$). Thus using Corollary \ref{cor2}, we have $cs_{\mathbb{Z}}(\mathbb{T}^3,\xi_2')\geq 4$. Hence, $cs_{\mathbb{Z}}(\mathbb{T}^3,\xi_2')= 4$.
    
    Repeating the same process with $(\mathbb{T}^3,\xi_2')$ and $(\mathbb{S}^3,\xi_1)$, we can construct contact structures $\xi_3'$ on $\mathbb{T}^3$ with Euler class $0$ and $d_3(\xi_3')=\frac{7}{2}$. Again by using Corollary \ref{cor2}, we see that $cs_{\mathbb{Z}}(\mathbb{T}^3,\xi_3')\geq 4$. Inductively, we repeat the same process with $(\mathbb{T}^3,\xi_{n-1}')$ and $(\mathbb{S}^3,\xi_1)$ to obtain $(\mathbb{T}^3,\xi_{n}')$ with $PD(e(\xi_n'))=0$ and $d_3(\xi_n')=\frac{2n+1}{2}\: \forall n\geq 2$. All these contact structures are (non-contactomorphic) overtwisted, with $cs_{\mathbb{Z}}(\mathbb{T}^3,\xi_n')\geq 4\: \forall \: n\geq 4$.
\end{proof}

\let\MRhref\undefined
  \bibliographystyle{plain}
	\bibliography{bibliography.bib}

\end{document}

%% file: PushOff.pdf_tex
\begingroup%
  \makeatletter%
  \providecommand\color[2][]{%
    \errmessage{(Inkscape) Color is used for the text in Inkscape, but the package 'color.sty' is not loaded}%
    \renewcommand\color[2][]{}%
  }%
  \providecommand\transparent[1]{%
    \errmessage{(Inkscape) Transparency is used (non-zero) for the text in Inkscape, but the package 'transparent.sty' is not loaded}%
    \renewcommand\transparent[1]{}%
  }%
  \providecommand\rotatebox[2]{#2}%
  \ifx\svgwidth\undefined%
    \setlength{\unitlength}{49.54732089bp}%
    \ifx\svgscale\undefined%
      \relax%
    \else%
      \setlength{\unitlength}{\unitlength * \real{\svgscale}}%
    \fi%
  \else%
    \setlength{\unitlength}{\svgwidth}%
  \fi%
  \global\let\svgwidth\undefined%
  \global\let\svgscale\undefined%
  \makeatother%
  \begin{picture}(1,1.00059637)%
    \put(0,0){\includegraphics[width=\unitlength,page=1]{PushOff.pdf}}%
  \end{picture}%
\endgroup%